\documentclass[amstex,11pt,reqno,english]{amsart}
\usepackage{amsmath,amsfonts,amssymb,amsthm,enumerate,hyperref,multicol,graphicx}
\usepackage{sidecap}
\usepackage{lineno,hyperref}
\usepackage{mathrsfs,  amssymb, bbm}
\usepackage{wrapfig}
\usepackage{babel,blindtext}
\usepackage{caption, float}
\usepackage{subfig}
\usepackage{amsmath}
\linespread{1.3}
\textwidth 16cm
\textheight 22cm
\topmargin -1.0cm
\oddsidemargin 0.1cm
\evensidemargin 0.1cm
\newtheorem{lemma}{Lemma}
\newtheorem{thm}{Theorem}

\newtheorem{corollary}{Corollary}
\newtheorem{remark}{Remark}
\thispagestyle{empty}
\numberwithin{equation}{section}
\numberwithin{thm}{section}
\numberwithin{corollary}{section}
\numberwithin{remark}{section}
\numberwithin{lemma}{section}

\usepackage{mathtools}

\newtheorem*{ack}{Acknowledgement}

\allowdisplaybreaks
\begin{document}
\title[Tamed Milstein Scheme for SDEwMS]{On Explicit Tamed Milstein-type scheme for Stochastic Differential Equation with Markovian Switching}

\author{Chaman Kumar}
\address{Department of Mathematics\\
Indian Institute of Technology Roorkee, India}
\email{C.Kumarfma@iitr.ac.in}

\author{Tejinder Kumar}
\address{Department of Mathematics\\
Indian Institute of Technology Roorkee, India}
\email{tejinder.dma2017@iitr.ac.in}

\maketitle

\begin{abstract}
We propose a  new tamed Milstein-type scheme for stochastic differential equation with Markovian switching when drift coefficient is assumed to grow super-linearly. The strong rate of convergence is shown to be equal to $1.0$ under mild regularity (e.g. once differentiability)  requirements on  drift and diffusion coefficients. Novel techniques are developed to tackle two-fold difficulties arising due to jumps of the Markov chain and the reduction of regularity requirements on the coefficients. 
\newline \newline 
\textit{AMS subject classifications: Primary 60H35; secondary 65C30}
\end{abstract}

\section{\bf{Introduction}}
\label{intro}
The stochastic differential equation with Markovian switching (SDEwMS) have found several important applications in real life situations, see for example, \cite{bao2016, mao2006,  nguyen2012, sethi1994, zhang1998, zhang2001} and references therein.  Often, explicit solutions of such equations are not known and hence it becomes necessary to find their approximate solutions. After the seminal work of \cite{giles2008},  strong convergence of numerical approximation of stochastic differential equations (SDEs) have gained a great deal of importance due to its requirement for multi-level Monte Carlo scheme. The strong convergence of  easily implementable and computationally efficient Euler-Maruyama (EM) scheme of SDEwMS is well understood in the literature when drift and diffusion coefficients satisfy global Lipschitz conditions, see for example \cite{mao2006, nguyen2012} and references therein. A new variant of Euler-type schemes, often known as tamed EM schemes,  are recently developed to obtain approximate solutions of SDEwMS when drift/diffusion coefficient(s) is/are allowed grow super-linearly, see for example  \cite{dareiotis2016, kumar2017, nguyen2018}. The rates of convergence of   EM and tamed EM schemes for SDEwMS are known to be equal to  $1/2$.  However,  very scant literature is available on higher order numerical approximations of SDEwMS even for the simple case where both drift and diffision coefficients satisfy global Lipschitz conditions. The main reason for such a scarse literature on higher order approximations of SDEwMS is the absence of It\^o's-Taylor expansion for coefficients which depend on Markov chain such as those encountered in SDEwMS. It can be remarked that  higher order approximations of stochastic differential equations (SDEs) are well developed due to the presence of It\^o's-Taylor expansion, see  \cite{kloeden1992} for detailed discussion. Furthermore,  different variants of tamed Milstein scheme for SDE have been studied by  \cite{beyn2017, kumar2017, kumar2019, tretyakov2013} where authors estalish that the strong convergence of their variant of tamed Milstein schemes for SDE is $1.0$ when either drift or diffusion or both the coefficients satisfy non-global Lipschitz condition and can grow super-linearly. Recently, Milstein-type schemes of SDEwMS is investigated  in \cite{nguyen2017} where authors establish that the rate of strong convergence of their scheme is equal to $1.0$ when both drift and diffusion coefficients satisfy global Lipschitz conditions. In order to show their results, they derive a new version of It\^o's formula for coefficient  which depends on the Markov chain (such as drift and diffusion coefficients of SDEwMS). Moreover, they assume that both drift and diffusion coefficients are twice differentiable and satisfy some additional global Lipschitz conditions. In this article, we propose a tamed Milstein scheme for SDEwMS and investigate its strong convergence under more relaxed assumptions than those made in  \cite{nguyen2017}. More precisely,  strong rate of convergence of our scheme is shown to be equal to $1.0$,  under much improved conditions than those made in \cite{nguyen2017} such as, 
\begin{enumerate}[(a)]
\item drift coefficient satisfies non-global Lipschitz condition and can grow super-linearly,
\item both drift and diffusion coefficients are assumed to be only once differentiable, 
\item drift and its first order partial derivatives  satisfy polynomial Lipschitz condition,  and
\item diffusion and its first order partial derivatives  satisfy global Lipschitz condition.   
\end{enumerate}
These relaxed assumptions allow us to cover more SDEwMS than those covered  by  \cite{nguyen2017}. Moreover, new techniques are developed to tackle challenges posed - (i) by these relaxed assumptions as mentioned above,  and - (ii)  by  jumps arising due to Markovian switching. The approach developed in this paper can be used to study strong convergence of other higher order schemes and to reduce the regularity requirements on the coefficients of SDEwMS.   To the best of our knowledge, this is the first article which deals with tamed Milstein-type scheme for SDEwMS.

We conclude this section by introducing some notations used throughout in this  article. For a vector $b\in \mathbb{R}^d$ and a matrix $\sigma \in \mathbb{R}^{d\times m}$, same notation $|\cdot|$ is used for both  Euclidean and Hilbert-schmidt norms, which can be understood from each instance in this article.    Also, $\sigma^{(l)}\in\mathbb{R}^d$ stands for  $l$-th column of a matrix $\sigma\in\mathbb{R}^{d\times m}$.  For vectors $x,y\in\mathbb{R}^d$, $xy$ denotes  their inner product. The matrix transpose is abbreviated by $\sigma^T$ for $\sigma\in \mathbb{R}^{d \times m}$ . Furthermore, for a function $f:\mathbb{R}^d \to \mathbb{R}^d$, $\mathcal{D}f$ returns a $d\times d$ matrix with  $\frac{\partial f^i(\cdot)}{\partial x^j}$ as its $(i,j)$-th entry for  any $i,j=1,\ldots,d$. The indicator function of set $A$ is denoted by $\mathbb{I}\{A\}$. $\lfloor a \rfloor$ stands for the integer part of a positive real number $a$. Also, $a\wedge b$ $=$ $\min (a, b)$ for real positive numbers $a$ and $b$. Finally, symbol $K$ is used throughout this article for a generic positive constant $K$  which vary from place to place.

\section{\bf{Main Result}}
Suppose that  $ \big( \Omega ,   \mathscr{F}, P \big)$  is  a complete probability space. Let $W:=\{W_t; t\geq 0\}=\{(W_t^l)_{l=1}^m; t\geq 0\}$ be an $\mathbb{R}^m$-valued standard Wiener process. For a fixed positive integer $m_0$, let $\alpha:=\{\alpha_t;t\geq 0\}$ be a continuous-time Markov chain defined on a finite state space $\mathcal{S}=\{1,2,\ldots,m_0 \}$ with generator  $Q=(q_{i_0j_0}; i_0, j_0 \in \mathcal{S})$ where, 
\begin{align} 
P(\alpha_{t+\delta}=j_0|\alpha_t=i_0)=
\begin{cases}
q_{i_0j_0}\delta+o(\delta), &\text{if } i_0\neq j_0,\\
1+q_{i_0j_0}\delta+o(\delta), &\text{if } i_0= j_0,
\end{cases} \notag
\end{align}
 $q_{i_0j_0}\geq 0$,  for any $i_0\neq j_0\in \mathcal{S}$ and $q_{i_0i_0}=-\sum_{j_0\neq i_0} q_{i_0j_0}$ for any $i_0\in\mathcal{S}$. Also, assume that $b:\mathbb{R}^d\times \mathcal{S} \to \mathbb{R}^d$ and $\sigma:\mathbb{R}^d\times \mathcal{S} \to \mathbb{R}^{d \times m}$  are measurable functions. In this article, the following $d$-dimensional stochastic differential equation with Markovian switching (SDEwMS) is considered, for any $t\in[0,T]$, almost surely, 
\begin{align} \label{eq:sdems}
X_t=X_0+\int^t_0b(X_s,\alpha_s)ds+\sum_{l=1}^m\int^t_0 \sigma^{(l)}(X_s,\alpha_s)dW_s^l
\end{align} 
where  initial value $X_0$ is an $\mathscr{F}_0$-measurable random variable taking values in $\mathbb{R}^d$. Also, $X_0$,  $W$ and $\alpha$ are assumed to be independent. Further, suppose that $\mathscr{F}^W$ and $\mathscr{F}^\alpha$ are  smallest filtrations generated by $(X_0,W)$  and $\alpha$. Define $\mathscr{F}_t:=\mathscr{F}_t^W \vee \mathscr{F}_t^\alpha$ for any $t\geq 0$. 

We now proceed to introduce our Milstein-type scheme for SDEwMS \eqref{eq:sdems}. For this, let us partition the interval $[0,T]$ into sub-intervals each of length $h=1/n$ for any $n\in\mathbb{N}$. Set $t_k:=k/n$, $\Delta_k t:=t_{k+1}-t_k$ and $\Delta_kW:=W_{t_{k+1}}-W_{t_{k}}$ for $k=0,\ldots,nT-1$. Further, let $N_{(t_k,t_{k+1})}$ be the number of jumps of the chain $\alpha$ and $\tau_1^k$ be the first first jump-time in interval $(t_k,t_{k+1})$ for any $k=0,\ldots,nT-1$. Further, $\{b^n;n\in\mathbb{N}\}$ is a sequence of functions from $\mathbb{R}^d$ to $\mathbb{R}^d$.  Our tamed Milstein-type scheme for SDEwMS at grid point $t_{k+1}$ is given by, 
\begin{align}
X_{t_{k+1}}^n & =X^n_{t_k}+b^n(X^n_{t_k},\alpha_{t_k})\Delta_kt+\sum_{l=1}^m \sigma^{(l)}(X^n_{t_k}, \alpha_{t_k}) \Delta_kW^l \notag
\\
&+\sum_{l,l_1=1}^m \int_{t_{k}}^{t_{k+1}} \int_{t_{k}}^s \mathcal{D} \sigma^{(l)}(X^n_{t_k}, \alpha_{t_k})\sigma^{(l_1)}(X^n_{t_k},\alpha_{t_k})dW_u^{l_1}dW_s^l \notag
\\
&+\sum_{l=1}^m \mathbb{I}{\{N_{(t_k,t_{k+1})}=1\}} \Big(\sigma^{(l)}(X^n_{t_k},\alpha_{t_{k+1}})-\sigma^{(l)}(X^n_{t_k},\alpha_{t_k})\Big)\Big(W^l_{t_{k+1}}-W^l_{\tau_1^k}\Big) \label{eq:scheme:dis}
\end{align}
almost surely for any $k=0,\ldots,nT-1$ where initial value $X^n_0$ is an $\mathscr{F}_0$-measurable random variable in $\mathbb{R}^d$. The motivation for the above scheme \eqref{eq:scheme:dis} comes from \cite{kumar2019a} where a Milstein-type scheme is derived for SDEwMS when both drift and diffusion coefficients satisfy global Lipschitz conditions. If the following commutative conditions, 
\begin{align}
 \mathcal{D} \sigma^{(l)}(x, i_0)\sigma^{(l_1)}(x,i_0) = \mathcal{D} \sigma^{(l_1)}(x, i_0)\sigma^{(l)}(x,i_0) \label{eq:comm}
\end{align}
hold for all $x\in\mathbb{R}^d$, $i_0\in\mathcal{S}$ and $l,l_1=1,\ldots,m$,  then  scheme \eqref{eq:scheme:dis} becomes, 
\begin{align*}
X^n_{t_{k+1}}& =X^n_{t_k}+b^n(X^n_{t_k},\alpha_{t_k})\Delta_kt+\sum_{l=1}^m \sigma^{(l)}(X^n_{t_k}, \alpha_{t_k}) \Delta_kW^l \notag
\\
&+\frac{1}{2}\sum_{l,l_1=1}^m  \sigma^{(l)}(X^n_{t_k}, \alpha_{t_k})\sigma^{(l_1)}(X^n_{t_k},\alpha_{t_k}) (\Delta_k W_t^{l_1}\Delta_k W_t^l-\mathbb{I}\{l=l_1\}\Delta_k t ) \notag
\\
&+\sum_{l=1}^m \mathbb{I}{\{N_{(t_k,t_{k+1})}=1\}} \Big(\sigma^{(l)}(X^n_{t_k},\alpha_{t_{k+1}})-\sigma^{(l)}(X^n_{t_k},\alpha_{t_k})\Big)\Big(W^l_{t_{k+1}}-W^l_{\tau_1^k}\Big) 
\end{align*}
almost surely for any $k=0,\ldots,nT-1$ . The above scheme can be implemented easily on computer. However, if the commutative condition \eqref{eq:comm} does not hold, then one can refer to \cite{davie2015} to calculate  iterated Brownian integrals  appearing in the scheme \eqref{eq:scheme:dis}.  We now proceed towards the construction of continuous version of  the scheme \eqref{eq:scheme:dis}. For this, we introduce a martingale associated with Markov chain $\alpha$ by adopting the approach of  \cite{nguyen2017}.  For each $i_0, j_0 \in \mathcal{S}$ and $i_0\neq j_0$, define, 
\begin{align}
[M_{i_0j_0}](t):=\sum_{0\leq s\leq t}\mathbb{I}\{\alpha_{s-}=i_0\} \mathbb{I} &\{\alpha_s=j_0\}, \,\,\, \langle M_{i_0j_0}\rangle(t):=\int_0^t q_{i_0j_0}\mathbb{I}\{\alpha_{s-}=i_0\} ds, \notag
\\
M_{i_0j_0}(t)&:= [M_{i_0 j_0}](t)-\langle M_{i_0j_0}\rangle(t)\notag 
\end{align}
almost surely for any $t\in[0,T]$. The stochastic processes $\{[M_{i_0 j_0}](t); t\in[0,T]\}$ and $\{\langle M_{i_0j_0}\rangle(t); t\in[0,T]\}$ are respectively optional and predictable quadratic variations. Also, the stochastic process  $\{M_{i_0j_0}(t); \, t\in[0,T]\}$ is a purely discontinuous process and is a square integrable martingale with respect to filtration $\{\mathscr{F}_{t}^\alpha; \, t \in [0,T]\}$ with $M_{i_0j_0}(0)=0$ almost surely. For the convenience of notation, set $M_{i_0i_0}(t)=0$ for any $i_0\in\mathcal{S}$ and $t\in[0,T]$. Further, $\kappa(n,t):=\lfloor nt \rfloor/n$ for any $n\in\mathbb{N}$. The continuous version of scheme \eqref{eq:scheme:dis} is given by 
\begin{align}\label{eq:scheme}
X_t^n=X^n_0+\int^t_0b^n(X^n_{\kappa(n,s)},\alpha_{\kappa(n,s)})ds+\sum^m_{l=1}\int^t_0\tilde{\sigma}^{(l)}(s,X^n_{\kappa(n,s)},\alpha_{\kappa(n,s)})dW^l_s
\end{align}
almost surely for any $t\in[0,T]$ and $n\in\mathbb{N}$. In the above,  $\tilde{\sigma}^{(l)}\in\mathbb{R}^d$ is $l$-th column of $d\times d$ matrix $\tilde{\sigma}$ and is given by, 
\begin{align*}
\tilde{\sigma}(s,X^n_{\kappa(n,s)},\alpha_{\kappa(n,s)})&:=\sigma(X^n_{\kappa(n,s)},\alpha_{\kappa(n,s)})+\sigma_1(s,X^n_{\kappa(n,s)},\alpha_{\kappa(n,s)})+\sigma_2(s,X^n_{\kappa(n,s)},\alpha_{\kappa(n,s)})
\end{align*}
where  $\sigma_1$ and $\sigma_2$ in the above expression are $d\times d$ matrix with $l$-th column as, 
\begin{align*}
\sigma_1^{(l)}(s,X^n_{\kappa(n,s)},\alpha_{\kappa(n,s)})&:=\sum^m_{l_1=1}\int^s_{\kappa(n,s)}\mathcal{D}\sigma^{(l)}(X^n_{\kappa(n,r)},\alpha_{\kappa(n,r)})\sigma^{(l_1)}(X^n_{\kappa(n,r)},\alpha_{\kappa(n,r)})dW^{l_1}_r,
\\
\sigma_2^{(l)}(s,X^n_{\kappa(n,s)},\alpha_{\kappa(n,s)})&:=\sum_{i_0\neq j_0}\int^s_{\kappa(n,s)}\mathbb{I}\{N_{(\kappa(n,s),s)}=1\} (\sigma^{(l)}(X^n_{\kappa(n,r)},j_0)-\sigma^{(l)}(X^n_{\kappa(n,r)},i_0))d[M_{i_0j_0}](r),
\end{align*}
almost surely for any $s\in[0,T]$, $l=1,\ldots,m$ and $n\in\mathbb{N}$. Notice that the continuous version of the Milstein-type scheme \eqref{eq:scheme} coincides with the discrete version of the scheme \eqref{eq:scheme:dis} at the points of discretization  $t_0,t_1,\ldots,t_{nT}$. 

Let $p\geq 2$, $\rho, \rho_1\geq 0$  be  fixed constants. We make the following assumptions. 
\newline  
\noindent 
\textbf{Assumption $H$-1.} There exists a  constant $L>0$ such that  $E|X_0|^{p}   < \infty$. 
\newline 
\noindent 
\textbf{Assumption $H$-2.} There exists a constant $L>0$ such that, for every $i_0\in \mathcal{S}$,
\begin{align*}
(x-y)(b(x,i_0)-b(y,i_0))\vee |\sigma(x,i_0)-\sigma(y,i_0)|^2\leq L|x-y|^2
\end{align*}
 for any  $x,y\in \mathbb{R}^d$. 
 \newline 
\noindent 
\textbf{Assumption $H$-3.} There exists  a constant $L>0$  such that, for every $i_0\in \mathcal{S}$,
\begin{align*}
|\mathcal{D}b(x,i_0)-\mathcal{D}b(y,i_0)|&\leq L(1+|x|+|y|)^{\rho-1}|x-y|
\\
|\mathcal{D}\sigma^{(l)}(x,i_0)-\mathcal{D}\sigma^{(l)}(y,i_0)| &\leq L|x-y|, \,\,  l=1,\ldots, m
\end{align*} 
for any $x,y\in \mathbb{R}^d$.
\newline  
\noindent 
\textbf{Assumption $H$-4.} There exists a  constant $L>0$ such that  $ E|X_0^n|^{p}  < \infty$ and $E|X_0-X_0^n|^2\leq Ln^{-2}$, for every $n\in\mathbb{N}$.
 \newline 
\noindent 
\textbf{Assumption $H$-5.} There exists a constant  $L>0$ such that 
\begin{align*}
xb^n(x,i_0)\leq L(1+|x|)^2, \quad |b^n(x,i_0)|\leq L \min\{n^{\frac{1}{2}}(1+|x|),(1+|x|)^{\rho_1+1}\}
\end{align*}
for any $x\in\mathbb{R}^d$, $n\in\mathbb{N}$ and $i_0\in\mathcal{S}$. 
  \newline 
\noindent 
\textbf{Assumption $H$-6.} There exists a constant  $L>0$ such that
\begin{align*}
|b(x,i_0)-b^n(x,i_0)|\leq Ln^{-1}(1+|x|)^{\rho_1+1}
\end{align*}
for any $x\in\mathbb{R}^d$, $n\in\mathbb{N}$ and $i_0\in\mathcal{S}$.
\newline
\begin{remark}\label{rem:growth}
Due to Assumptions $H$-2 and $H$-3,  for every $i_0\in\mathcal{S}$, there exists a constant $L>0$ such that, 
\begin{align*}
xb(x,i_0)\leq L(1+|x|)^2, \quad  |b(x,i_0)|  & \leq  L(1+|x|)^{\rho+1}, \quad |\sigma(x,i_0)|\leq L(1+|x|)
\\
 |\mathcal{D}b(x,i_0)|\leq L(1+|x|)^{\rho} ,\quad &  |\mathcal{D} \sigma^{(l)}(x,i_0)| \leq L,\, l=1,\ldots, m
 \\
 |b(x,i_0)-b(y,i_0)|\leq & L(1+|x|+|y|)^{\rho}|x-y|,
\end{align*}
for any $x,y\in\mathbb{R}^d$. 
\end{remark}
The following theorem states the main result of this article. 
\begin{thm} \label{thm:main}
Let Assumptions $H$-1 to $H$-6 be satisfied. Then, the Milstein-type scheme defined in equation \eqref{eq:scheme} converges in $\mathcal{L}^2$-sense to the true solution of SDEwMS \eqref{eq:sdems} with a rate of convergence equal to $1$. In other words,  there exists a constant $K>0$ (independent of $n$) such that the following holds, 
$$
\sup_{t\in[0,T]}E|X_t-X^n_t|^2 \leq K n^{-2}
$$
for any $n\in \mathbb{N}$, where $0<h=(1/n)<1/(2q)$ with $q=\max\{-q_{i_0i_0};i_0\in\mathcal{S}\}$. 
\end{thm}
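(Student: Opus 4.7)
The plan is to bound $\sup_t E|e_t|^2$ with $e_t := X_t - X_t^n$ by comparing the integral representations of $X_t$ and $X_t^n$ and applying It\^o's formula to $|e_t|^2$. Since the Markov chain $\alpha$ enters only through the coefficients, both $X$ and $X^n$ have continuous paths, so It\^o yields
\begin{align*}
E|e_t|^2 &= E|e_0|^2 + 2\, E\int_0^t e_s \cdot [b(X_s,\alpha_s) - b^n(X^n_{\kappa(n,s)},\alpha_{\kappa(n,s)})]\, ds \\
&\quad + E\int_0^t |\sigma(X_s,\alpha_s) - \tilde{\sigma}(s,X^n_{\kappa(n,s)},\alpha_{\kappa(n,s)})|^2\, ds.
\end{align*}
Assumption $H$-4 provides $E|e_0|^2 \leq L n^{-2}$, and the goal is to bound each time-integral above by $K n^{-2} + K \int_0^t E|e_s|^2\, ds$ and then conclude by Gronwall. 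Throughout, I would rely on moment bounds $\sup_n\sup_{t\in[0,T]} E|X_t^n|^p + \sup_t E|X_t|^p < \infty$ and the one-step estimate $E|X_s^n - X^n_{\kappa(n,s)}|^{2p} \leq K n^{-p}$ for sufficiently large $p$ depending on $\rho, \rho_1$; these would be supplied by preceding auxiliary lemmas.

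For the drift inner product, I would telescope $b(X_s,\alpha_s) - b^n(X^n_{\kappa(n,s)}, \alpha_{\kappa(n,s)})$ through the intermediate points $b(X_s^n,\alpha_s)$, $b(X^n_{\kappa(n,s)},\alpha_s)$ and $b(X^n_{\kappa(n,s)},\alpha_{\kappa(n,s)})$. The monotonicity in $H$-2 dominates $e_s\cdot[b(X_s,\alpha_s)-b(X_s^n,\alpha_s)]$ by $L|e_s|^2$, and the taming residual $b - b^n$ is $O(n^{-1})$ in $L^p$ by $H$-6, which absorbs into $K n^{-2}$ via Young. The spatial increment $b(X_s^n,\alpha_s)-b(X^n_{\kappa(n,s)},\alpha_s)$ is expanded by the fundamental theorem of calculus; the polynomial Lipschitz bound on $\mathcal{D}b$ from $H$-3 and moment bounds give magnitude $O(|X_s^n - X^n_{\kappa(n,s)}|)$, but to avoid losing a factor $n^{1/2}$ one must split $e_s = e_{\kappa(n,s)} + (e_s - e_{\kappa(n,s)})$ and condition the $e_{\kappa(n,s)}$ part on $\mathscr{F}_{\kappa(n,s)}$ so that the Brownian component of the scheme's one-step increment averages to zero, leaving only a drift remainder of size $O(n^{-1})$. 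For the Markov-jump increment $b(X^n_{\kappa(n,s)},\alpha_s)-b(X^n_{\kappa(n,s)},\alpha_{\kappa(n,s)})$, the conditional probability $P(\alpha_s\neq\alpha_{\kappa(n,s)}\mid\mathscr{F}_{\kappa(n,s)}) \leq K/n$, combined with Young's inequality (with $\varepsilon = 1/n$), produces the required $K n^{-2}$.

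For the diffusion square, the heart of the argument, I would decompose
\begin{align*}
\sigma(X_s,\alpha_s) - \tilde\sigma(s,X^n_{\kappa(n,s)},\alpha_{\kappa(n,s)})
&= [\sigma(X_s,\alpha_s) - \sigma(X_s^n,\alpha_s)] \\
&\quad + [\sigma(X_s^n,\alpha_s) - \sigma(X^n_{\kappa(n,s)},\alpha_s) - \sigma_1(s,\cdot)] \\
&\quad + [\sigma(X^n_{\kappa(n,s)},\alpha_s) - \sigma(X^n_{\kappa(n,s)},\alpha_{\kappa(n,s)}) - \sigma_2(s,\cdot)].
\end{align*}
The first bracket is controlled by $L|e_s|$ via $H$-2. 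The third bracket vanishes on $\{N_{(\kappa(n,s),s)} \leq 1\}$ by construction of $\sigma_2$, so it is supported on the two-or-more-jumps event of probability $O(n^{-2})$ and hence has squared $L^2$-norm $O(n^{-2})$. For the second bracket I would apply the fundamental theorem of calculus to write $\sigma^{(l)}(X_s^n,\alpha_s) - \sigma^{(l)}(X^n_{\kappa(n,s)},\alpha_s) = \int_0^1 \mathcal{D}\sigma^{(l)}((1-\lambda) X^n_{\kappa(n,s)} + \lambda X_s^n, \alpha_s)\, d\lambda\, (X_s^n - X^n_{\kappa(n,s)})$, insert the scheme's one-step expansion of $X_s^n - X^n_{\kappa(n,s)}$, and freeze $\mathcal{D}\sigma^{(l)}$ at $(X^n_{\kappa(n,s)},\alpha_{\kappa(n,s)})$; the frozen Brownian-integral piece recovers $\sigma_1^{(l)}$ exactly, and the residuals---(i) a Lipschitz remainder of $\mathcal{D}\sigma^{(l)}$ giving $O(|X_s^n - X^n_{\kappa(n,s)}|^2)$, (ii) a jump-mismatch $\mathcal{D}\sigma^{(l)}(\cdot,\alpha_s) - \mathcal{D}\sigma^{(l)}(\cdot,\alpha_{\kappa(n,s)})$, and (iii) the drift-type integrand $\mathcal{D}\sigma^{(l)}\, b^n$ of size $O(n^{-1})$---each have squared expectation $O(n^{-2})$ uniformly in $s$.

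The main obstacle is the second diffusion bracket: because $\sigma$ is only once differentiable, one cannot apply It\^o to $\sigma(X_r^n,\alpha_r)$ in $r$ to derive the Milstein expansion, and must instead work with the mean-value representation and carefully control the Lipschitz residual of $\mathcal{D}\sigma$ together with the interaction between Brownian increments and Markov-chain jumps. The delicate point is to keep cross-terms between chain jumps (probability $O(n^{-1})$) and Brownian increments (magnitude $O(n^{-1/2})$) to order $n^{-2}$ rather than the naive $n^{-3/2}$; this is achieved by conditioning on $\mathscr{F}^\alpha$ (using independence of $W$ and $\alpha$) and invoking the explicit form of the correction $\sigma_2$ in the scheme \eqref{eq:scheme}.
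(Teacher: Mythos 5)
Your overall architecture matches the paper's: It\^o on $|e^n_t|^2$, telescoping the drift through $b(X^n_s,\alpha_s)$, $b(X^n_{\kappa(n,s)},\alpha_s)$, $b(X^n_{\kappa(n,s)},\alpha_{\kappa(n,s)})$, the three-bracket diffusion decomposition with $\sigma_1$ recovered by a mean-value (rather than It\^o--Taylor) argument and $\sigma_2$ cancelling the chain increment on the one-jump event, conditioning on $\mathscr{F}^\alpha_T$ to decouple $W$ from $\alpha$, and Gronwall. Your diffusion analysis is sound, and your observation that the third bracket vanishes on $\{N_{(\kappa(n,s),s)}\le 1\}$ is in fact a slightly cleaner organization than the paper's, which instead carries a mismatch term (the paper's $T_1$) between the jump increment evaluated at $X^n_u$ versus $X^n_{\kappa(n,u)}$.

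There is, however, a genuine gap in your drift estimate, precisely at the Markov-jump increment $v_s:=b(X^n_{\kappa(n,s)},\alpha_s)-b(X^n_{\kappa(n,s)},\alpha_{\kappa(n,s)})$. This term is $\mathbb{I}\{N_{(\kappa(n,s),s)}\ge 1\}$ times an $O(1)$ quantity, so the best you can get from the probability bound is $E|v_s|^2\le K\,P(N_{(\kappa(n,s),s)}\ge 1)\le Kn^{-1}$; it is \emph{not} $O(n^{-2})$ in squared $L^2$-norm. Plain Young then yields $E[e^n_s\cdot v_s]\le \tfrac12 E|e^n_s|^2+Kn^{-1}$, which after Gronwall gives only rate $1/2$, and no choice of the Young parameter $\varepsilon$ rescues this: taking $\varepsilon$ small enough to push the $|v_s|^2$ contribution down to $n^{-2}$ inflates the $|e^n_s|^2$ coefficient to order $n$ and hence the Gronwall constant to $e^{Kn}$. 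The trick you correctly invoke for the Brownian part of the one-step increment --- split $e^n_s=e^n_{\kappa(n,s)}+(e^n_s-e^n_{\kappa(n,s)})$ and kill the $e^n_{\kappa(n,s)}$ pairing by a conditional-mean-zero argument --- must be applied to the chain increment as well. Concretely, one decomposes $v_s$ via the martingales $M_{i_0j_0}=[M_{i_0j_0}]-\langle M_{i_0j_0}\rangle$ into a purely discontinuous martingale increment $H_s=\sum_{i_0\neq j_0}\int_{\kappa(n,s)}^s(b(X^n_r,j_0)-b(X^n_r,i_0))\,dM_{i_0j_0}(r)$ plus a compensator $\sum_{j_0}\int_{\kappa(n,s)}^s q_{\alpha_{r-}j_0}(\cdots)\,dr$. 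The compensator is pathwise $O(n^{-1})$ (an integral of a bounded-moment integrand over an interval of length $\le 1/n$), hence $O(n^{-2})$ in squared $L^2$ and harmless under Young; the martingale part satisfies $E[e^n_{\kappa(n,s)}H_s]=0$, and the cross term $E[(e^n_s-e^n_{\kappa(n,s)})H_s]$ is controlled by Cauchy--Schwarz using $E|H_s|^2\le Kn^{-1}$ against the $O(n^{-3})$-or-better squared $L^2$-size of the pieces of $e^n_s-e^n_{\kappa(n,s)}$. This is exactly the content of the paper's Lemma \ref{lem:for applying mvt} and Lemma \ref{lem:bCrusial} (the identity $C_3=0$ there covers both the chain-jump martingale $H_s$ and the Brownian martingale $R_s$ simultaneously). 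Without this step your argument proves only the Euler rate for the drift contribution.
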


A simple example of tamed Milstein scheme for SDEwMS can be obtained by taking 
\begin{align*}
b^n(x,i_0)=\frac{b(x,i_0)}{1+n^{-1}|x|^{2\rho}}
\end{align*}
for any $n\in\mathbb{N}$, $x\in\mathbb{R}^d$ and $i_0\in\mathcal{S}$. Clearly, Assumption H-5 and H-6 are satisfied with $\rho_1=3\rho$. 
\begin{remark}
The technique developed in this paper can be extended to the case when both drift and diffusion coefficients can grow super-linearly. Moreover, higher order schemes can be also be investigated on the line similar to the one developed in this paper.  The main approach would be to look for an It\^o's-Taylor expansion (similar to one given in \cite{kloeden1992})  for drift and diffusion coefficients which depend on the Markov chain $\alpha$ as in the case of SDEwMS. The strong rate of convergence can then be shown to be equal to any desired order by appropriately adding terms from this expansion and accordingly tame the terms of such expansion. Notice that Milstein-type scheme \eqref{eq:scheme} for SDEwMS consists of an additional term in form of $\sigma_2$ when compared with the corresponding Milstein-type schemes for SDE \cite{ kloeden1992, kumar2017, kumar2019}. One would expect similar additional terms in higher order schemes of SDEwMS. Further, one can reduce regularity requirements on the coefficients by adapting the approach developed in this article while proving strong convergence of higher order schemes. 
\end{remark}

\section{\bf{Moment Bound.}}
This section is devoted to proving the moment bounds of  SDEwMS \eqref{eq:sdems} and its scheme \eqref{eq:scheme}. The proof of the moment bounds of SDEwMS \eqref{eq:sdems}, which is stated in the following lemma,  can be found in \cite{mao2006}, see for example Theorems [3.3.16, 3.3.23, 3.3.24] in this reference.

\begin{lemma}\label{lem:true moment and 1/2 rate}
Let Assumptions $H$-1 and $H$-2 be satisfied. Then,  
\begin{align*}
E\bigg(\sup_{t\in [0,T]}|X_t|^{p}\Big|\mathscr{F}_T^{\alpha}\bigg)& \leq K
\end{align*}
almost surely,  where  $K>0$ is a  constant.  
\end{lemma}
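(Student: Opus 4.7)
The plan is to exploit the independence of $W$ and $\alpha$ so that, after conditioning on $\mathscr{F}_T^\alpha$, the SDEwMS reduces to a classical SDE with (random but $\mathscr{F}_T^\alpha$-measurable) time-inhomogeneous coefficients $\tilde b(s,x) := b(x,\alpha_s)$ and $\tilde\sigma(s,x) := \sigma(x,\alpha_s)$, driven by $W$ which remains an $\mathscr{F}_T^\alpha$-conditional Wiener process. Then the estimate follows by the standard monotonicity-plus-It\^o argument, with the crucial observation that the constants coming from Assumption $H$-2 are uniform in $i_0\in\mathcal{S}$ (since $\mathcal{S}$ is finite), so the resulting bound on the conditional expectation is a genuine deterministic constant rather than an $\alpha$-dependent quantity.

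Concretely, I would first derive from Assumption $H$-2 the auxiliary estimates $x\,b(x,i_0)\leq K(1+|x|^2)$ and $|\sigma(x,i_0)|\leq K(1+|x|)$, valid for all $x\in\mathbb{R}^d$ and $i_0\in\mathcal{S}$, by setting $y=0$ in the monotonicity and Lipschitz inequalities and using $|b(0,i_0)|, |\sigma(0,i_0)|\leq L$ for $i_0$ in the finite set $\mathcal{S}$. Next, apply It\^o's formula to $V(x):=(1+|x|^2)^{p/2}$ along the solution of \eqref{eq:sdems}. The drift part of $dV(X_t)$ is bounded above by $K\,V(X_t)\,dt$ using the two estimates just mentioned, while the martingale part $M_t$ is an $\mathscr{F}_t$-local martingale. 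Localizing by a sequence of stopping times $\tau_R:=\inf\{t:|X_t|\geq R\}$ gives
\[
V(X_{t\wedge \tau_R}) \leq V(X_0) + K\int_0^{t\wedge\tau_R} V(X_s)\,ds + M_{t\wedge \tau_R}
\]
almost surely.

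Since $W$ is independent of $\mathscr{F}_T^\alpha$, the process $M_{\cdot \wedge \tau_R}$ is still a martingale under $P(\cdot\mid\mathscr{F}_T^\alpha)$, so taking conditional expectation makes the martingale term vanish. A conditional Gronwall inequality then yields $E\bigl[V(X_{t\wedge \tau_R})\bigm| \mathscr{F}_T^\alpha\bigr]\leq K$ with $K$ independent of $R$ and of the $\alpha$-path, after which Fatou and $R\to\infty$ remove the localization. Finally, to pass from a pointwise-in-$t$ bound to the supremum inside the conditional expectation, I would take the supremum before taking conditional expectation, use the conditional Burkholder-Davis-Gundy inequality on $\sup_{t\leq T}|M_t|$, and absorb the resulting $E[\sup V \mid \mathscr{F}_T^\alpha]$ term on the left using a standard splitting with small constant.

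The main obstacle is the interchange of the supremum and the conditional expectation, i.e.\ producing the $\sup$-inside bound from the pointwise bound; this is handled by the conditional BDG step described above, which works precisely because $W$ stays a Wiener process under $P(\cdot\mid\mathscr{F}_T^\alpha)$. All the remaining work is routine and this is exactly why the author is content to cite Mao's textbook rather than reproduce the argument.
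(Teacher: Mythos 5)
The paper offers no proof of this lemma, simply citing Theorems 3.3.16, 3.3.23 and 3.3.24 of Mao and Yuan; your argument is the standard one behind those results and also mirrors the paper's own proof of the scheme's moment bound (Lemma 3.3), namely It\^o's formula applied to $(1+|x|^2)^{p/2}$, conditional expectation given $\mathscr{F}_T^\alpha$ under which $W$ stays a Wiener process, conditional BDG plus Young to absorb the supremum, and Gronwall, with the constants uniform over the finite state space $\mathcal{S}$. Your proposal is correct and takes essentially the same approach.
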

For proving  moment bounds of the scheme \eqref{eq:scheme}, one requires to establish following lemmas. 
\begin{lemma}\label{lem:diffu.bound}
Let Assumptions $H$-2 to $H$-4 be satisfied. Then,
\begin{align*}
E\Big(|\sigma_1(t,X^n_{\kappa(n,t)},\alpha_{\kappa(n,t)})|^p\big|\mathscr{F}_T^{\alpha}\Big)&\leq Kn^{-\frac{p}{2}}E\big((1+|X^n_{\kappa(n,t)}|^2)^{\frac{p}{2}}\big|\mathscr{F}_T^{\alpha}\big),
\\
E\Big(|\sigma_2(t,X^n_{\kappa(n,t)},\alpha_{\kappa(n,t)})|^p\big|\mathscr{F}_T^{\alpha}\Big)&\leq  K \mathbb{I}\{N_{(\kappa(n,t),t)}=1\}E\big((1+|X^n_{\kappa(n,t)}|^2)^{\frac{p}{2}}\big|\mathscr{F}_T^{\alpha}\big)
\\
&\leq KE\big((1+|X^n_{\kappa(n,t)}|^2)^{\frac{p}{2}}\big|\mathscr{F}_T^{\alpha}\big)
\end{align*} 
almost surely for any $t\in[0,T]$ and $n\in\mathbb{N}$, where $K>0$ does not depend on $n$. 
\end{lemma}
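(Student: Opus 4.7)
The plan is to exploit the key observation that for $r \in [\kappa(n,s), s]$ one has $\kappa(n,r) = \kappa(n,s)$, so in both defining integrals the integrands are $\mathscr{F}_{\kappa(n,s)}$-measurable and pull outside the integrals. This reduces $\sigma_1$ and $\sigma_2$ to elementary quantities, and only linear growth / boundedness from Remark \ref{rem:growth} is needed.

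For the bound on $\sigma_1$, I would first rewrite
\[
\sigma_1^{(l)}(s, X^n_{\kappa(n,s)}, \alpha_{\kappa(n,s)}) = \sum_{l_1=1}^m \mathcal{D}\sigma^{(l)}(X^n_{\kappa(n,s)}, \alpha_{\kappa(n,s)}) \, \sigma^{(l_1)}(X^n_{\kappa(n,s)}, \alpha_{\kappa(n,s)}) \, (W^{l_1}_s - W^{l_1}_{\kappa(n,s)}).
\]
By Remark \ref{rem:growth}, $|\mathcal{D}\sigma^{(l)}| \leq L$ and $|\sigma^{(l_1)}(x,i_0)| \leq L(1+|x|)$, so the prefactor is bounded by $K(1+|X^n_{\kappa(n,s)}|)$. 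Taking $p$-th powers and conditioning on $\mathscr{F}_T^\alpha$, I would use that $W$ is independent of $\alpha$ and that $X^n_{\kappa(n,s)}$ is $\mathscr{F}^W_{\kappa(n,s)} \vee \mathscr{F}_T^\alpha$-measurable, so conditionally on $\mathscr{F}_T^\alpha$ the Brownian increment $W^{l_1}_s - W^{l_1}_{\kappa(n,s)}$ remains independent of $X^n_{\kappa(n,s)}$ with variance $s - \kappa(n,s) \leq n^{-1}$. Tower property then yields the factor $n^{-p/2}$ after using $(1+|x|)^p \leq K(1+|x|^2)^{p/2}$.

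For $\sigma_2$, the same localization gives
\[
\sigma_2^{(l)}(s, X^n_{\kappa(n,s)}, \alpha_{\kappa(n,s)}) = \sum_{i_0 \neq j_0} \mathbb{I}\{N_{(\kappa(n,s),s)}=1\} \bigl(\sigma^{(l)}(X^n_{\kappa(n,s)},j_0) - \sigma^{(l)}(X^n_{\kappa(n,s)},i_0)\bigr) \bigl([M_{i_0 j_0}](s) - [M_{i_0 j_0}](\kappa(n,s))\bigr).
\]
On the event $\{N_{(\kappa(n,s),s)}=1\}$, exactly one of the jump counters $[M_{i_0 j_0}]$ increases by one and the rest stay constant, so the whole sum collapses to a single difference $\sigma^{(l)}(X^n_{\kappa(n,s)},\alpha_s) - \sigma^{(l)}(X^n_{\kappa(n,s)},\alpha_{\kappa(n,s)})$, of magnitude at most $K(1+|X^n_{\kappa(n,s)}|)$ by the linear growth of $\sigma$. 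Taking $p$-th powers preserves the indicator (since $\mathbb{I}^p = \mathbb{I}$), and since the indicator is $\mathscr{F}_T^\alpha$-measurable it factors out of the conditional expectation, giving the first inequality; the second follows from $\mathbb{I} \leq 1$.

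The only delicate point will be the conditional independence argument for $\sigma_1$: I need that conditional on $\mathscr{F}_T^\alpha$, Brownian increments remain independent Gaussian with the correct variance, which uses the global independence of $W$ and $\alpha$ together with the filtration decomposition $\mathscr{F}_t = \mathscr{F}^W_t \vee \mathscr{F}^\alpha_t$. Everything else is a direct consequence of the collapse $\kappa(n,r)=\kappa(n,s)$ and the linear growth estimates in Remark \ref{rem:growth}.
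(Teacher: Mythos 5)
Your proposal is correct, and for the $\sigma_2$ estimate it is essentially the paper's argument: pull the (constant-on-the-subinterval) integrand out of the Stieltjes integral against $[M_{i_0j_0}]$, use the linear growth of $\sigma$ from Remark \ref{rem:growth}, and note that the $\mathscr{F}_T^{\alpha}$-measurable indicator factors out of the conditional expectation. For the $\sigma_1$ estimate you take a genuinely different, more elementary route. The paper keeps $\sigma_1^{(l)}$ as a stochastic integral and applies the conditional martingale (Burkholder--Davis--Gundy type) inequality together with H\"older's inequality, producing the factor $n^{-\frac{p-2}{2}}$ from the interval length and a further $n^{-1}$ from integrating the (bounded-in-$r$) integrand; you instead observe that $\kappa(n,r)=\kappa(n,s)$ on $[\kappa(n,s),s]$, so the integrand is $\mathscr{F}_{\kappa(n,s)}$-measurable and the integral collapses to $\mathcal{D}\sigma^{(l)}(X^n_{\kappa(n,s)},\alpha_{\kappa(n,s)})\sigma^{(l_1)}(X^n_{\kappa(n,s)},\alpha_{\kappa(n,s)})(W^{l_1}_s-W^{l_1}_{\kappa(n,s)})$, after which the $p$-th moment of the Gaussian increment is computed directly via the tower property and the independence of $W$ from $(X_0,\alpha)$. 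Both yield the same bound $Kn^{-p/2}E((1+|X^n_{\kappa(n,t)}|^2)^{p/2}|\mathscr{F}_T^{\alpha})$; your version is more explicit but relies on the integrand being frozen at $\kappa(n,s)$, whereas the paper's martingale-inequality route would survive a genuinely time-dependent integrand. The conditional-independence point you flag is indeed the only delicate step, and it is justified exactly as you say by the assumed independence of $X_0$, $W$ and $\alpha$ and the definition $\mathscr{F}_t=\mathscr{F}^W_t\vee\mathscr{F}^{\alpha}_t$.
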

\begin{proof}
By the application of martingale inequality and Remark \ref{rem:growth}, one obtains,
\begin{align*}
E\Big(|\sigma_1^{(l)}(t,&X^n_{\kappa(n,t)},\alpha_{\kappa(n,t)})|^p\big|\mathscr{F}_T^{\alpha}\Big)=E\Big(\Big|\sum^m_{l_1=1}\int^t_{\kappa(n,t)}\mathcal{D}\sigma^{(l)}(X^n_{\kappa(n,s)},\alpha_{\kappa(n,s)})
\\
&\quad\quad\times\sigma^{(l_1)}(X^n_{\kappa(n,s)},\alpha_{\kappa(n,s)})dW^{l_1}_s\Big|^p\big|\mathscr{F}_T^{\alpha}\Big)
\\
\leq&Kn^{-\frac{p-2}{2}} \sum^m_{l_1=1}\int^t_{\kappa(n,t)}E\Big(|\mathcal{D}\sigma^{(l)}(X^n_{\kappa(n,s)},\alpha_{\kappa(n,s)})\sigma^{(l_1)}(X^n_{\kappa(n,s)},\alpha_{\kappa(n,s)})|^p\big|\mathscr{F}_T^{\alpha}\Big)ds
\\
\leq& Kn^{-\frac{p}{2}}E\big((1+|X^n_{\kappa(n,t)}|^2)^{\frac{p}{2}}\big|\mathscr{F}_T^{\alpha}\big)
\end{align*}
almost surely for any $t\in[0,T]$, $l=1,\ldots,m$ and $n\in\mathbb{N}$. Furthermore,  Remark \ref{rem:growth} gives,
\begin{align*}
E\Big(|\sigma_2&(t,X^n_{\kappa(n,t)},\alpha_{\kappa(n,t)})|^p\big|\mathscr{F}_T^{\alpha}\Big)=E\Big(\big| \sum_{i_0\neq j_0}\int^t_{\kappa(n,t)} \mathbb{I}\{N_{(\kappa(n,t),t)}=1\}(\sigma(X^n_{\kappa(n,s)},j_0)
\\
&\quad\quad-\sigma(X^n_{\kappa(n,s)},i_0))d[M_{i_0j_0}](s) \big|^p\big|\mathscr{F}_T^{\alpha}\Big)
\\
& \leq  E\Big(\Big|\sum_{i_0\neq j_0}\int^t_{\kappa(n,t)}\mathbb{I}\{N_{(\kappa(n,t),t)}=1\}( 1+|X^n_{\kappa(n,s)}|) d[M_{i_0j_0}](s)\Big|^p\bigg|\mathscr{F}_T^{\alpha}\Big)
\\
& \leq  E\Big(\Big|\sum_{i_0\neq j_0}\mathbb{I}\{N_{(\kappa(n,t),t)}=1\}( 1+|X^n_{\kappa(n,t)}|) ([M_{i_0j_0}](t)-[M_{i_0j_0}](\kappa(n,t))) \Big|^p\bigg|\mathscr{F}_T^{\alpha}\Big)
\\
& \leq  E\Big(\Big|\mathbb{I}\{N_{(\kappa(n,t),t)}=1\}N_{(\kappa(n,t),t)}( 1+|X^n_{\kappa(n,t)}|)  \Big|^p\bigg|\mathscr{F}_T^{\alpha}\Big)
\\
& \leq E\big((1+|X^n_{\kappa(n,t)}|^2)^{\frac{p}{2}}\big|\mathscr{F}_T^{\alpha}\big)
\end{align*}
almost surely for any $t\in[0,T]$ and $n\in\mathbb{N}$. This completes the proof. 
\end{proof}
By using Remark \ref{rem:growth} and Lemma \ref{lem:diffu.bound}, following corollary can be made.
\begin{corollary}\label{cor:tildaSigma}
Let Assumptions $H$-2 to $H$-4 be satisfied. Then, 
\begin{align*}
E\big(|\tilde{\sigma}(t,X^n_{\kappa(n,t)},\alpha_{\kappa(n,t)})|^p\big|\mathscr{F}_T^{\alpha}\big)\leq KE\big((1+|X^n_{\kappa(n,t)}|^2)^{\frac{p}{2}}\big|\mathscr{F}_T^{\alpha}\big)
\end{align*}
almost surely for any $t\in[0,T]$ and $n\in\mathbb{N}$.
\end{corollary}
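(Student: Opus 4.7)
The corollary is a very direct consequence of the decomposition
\[
\tilde\sigma(t,X^n_{\kappa(n,t)},\alpha_{\kappa(n,t)})=\sigma(X^n_{\kappa(n,t)},\alpha_{\kappa(n,t)})+\sigma_1(t,X^n_{\kappa(n,t)},\alpha_{\kappa(n,t)})+\sigma_2(t,X^n_{\kappa(n,t)},\alpha_{\kappa(n,t)}),
\]
so the plan is simply to apply the elementary inequality $|a+b+c|^p\le 3^{p-1}(|a|^p+|b|^p+|c|^p)$, take conditional expectation with respect to $\mathscr F_T^\alpha$, and estimate the three resulting pieces separately.

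First, for the $\sigma$ piece I would invoke the linear growth bound $|\sigma(x,i_0)|\le L(1+|x|)$ recorded in Remark \ref{rem:growth}, which immediately gives $|\sigma(X^n_{\kappa(n,t)},\alpha_{\kappa(n,t)})|^p\le K(1+|X^n_{\kappa(n,t)}|^2)^{p/2}$ almost surely. For the $\sigma_1$ piece I would quote the first estimate of Lemma \ref{lem:diffu.bound}, which controls $E(|\sigma_1|^p\mid\mathscr F_T^\alpha)$ by $Kn^{-p/2}E((1+|X^n_{\kappa(n,t)}|^2)^{p/2}\mid\mathscr F_T^\alpha)$; since $n\ge 1$ the factor $n^{-p/2}$ can be absorbed into the constant. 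For the $\sigma_2$ piece I would quote the second estimate of the same lemma, which is already in the form required.

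Summing the three bounds and relabelling the constant yields the claim. The only subtlety worth flagging is notational: one must remember that $\sigma$ denotes a $d\times m$ matrix (the sum over its columns contributes another harmless constant), and the conditional expectation on $\mathscr F_T^\alpha$ commutes with the deterministic growth bound on $\sigma$ because the bound holds pointwise. There is no genuine obstacle here — the real technical work was already done inside Lemma \ref{lem:diffu.bound}, and the corollary is essentially bookkeeping.
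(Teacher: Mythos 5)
Your proposal is correct and matches the paper's intended argument exactly: the paper gives no separate proof, stating only that the corollary follows from Remark \ref{rem:growth} and Lemma \ref{lem:diffu.bound} via the decomposition $\tilde{\sigma}=\sigma+\sigma_1+\sigma_2$, which is precisely what you carried out.
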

Now, we proceed to establish moment bound of our Milstein-type scheme \eqref{eq:scheme}. 
\begin{lemma}\label{lem:SchemeMoment}
Let Assumptions $H$-2 to $H$-5 be satisfied. Then, for any $p\geq 2$ and $n\in\mathbb{N}$,
\begin{align*}
E\big( \sup_{0\leq t\leq T}|X^n_t|^p\big|\mathscr{F}_T^{\alpha}\big)\leq K
\end{align*}
almost surely, where constant $K>0$ does not depend on $n$.
\end{lemma}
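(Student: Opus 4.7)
The plan is to apply Itô's formula to $(1+|X^n_t|^2)^{p/2}$ conditionally on $\mathscr{F}_T^\alpha$, decompose the resulting expression term by term, and close the estimate with Gronwall's lemma. Since $W$ is independent of $\alpha$, conditioning on $\mathscr{F}_T^\alpha$ preserves the martingale structure of stochastic integrals against $W$, so BDG remains available.

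The first step is to apply Itô's formula to $V(x):=(1+|x|^2)^{p/2}$ along the semimartingale \eqref{eq:scheme}, producing three contributions: a drift integral whose integrand is proportional to $(1+|X^n_t|^2)^{p/2-1} X^n_t\cdot b^n(X^n_{\kappa(n,t)},\alpha_{\kappa(n,t)})$, a local martingale $M^n_t$ with integrand controlled by $|\tilde\sigma|$, and an Itô-correction integral whose integrand is controlled by $(1+|X^n_t|^2)^{p/2-1}|\tilde\sigma|^2$. The martingale term will be dealt with by BDG after taking $\sup_t$, the correction term by Corollary \ref{cor:tildaSigma}, and the drift term by the argument below, which is the heart of the proof.

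For the drift, the key is to exploit both halves of Assumption H-5 simultaneously through the split
\begin{equation*}
X^n_t\cdot b^n(X^n_{\kappa(n,t)},\alpha_{\kappa(n,t)}) = X^n_{\kappa(n,t)}\cdot b^n(X^n_{\kappa(n,t)},\alpha_{\kappa(n,t)}) + (X^n_t-X^n_{\kappa(n,t)})\cdot b^n(X^n_{\kappa(n,t)},\alpha_{\kappa(n,t)}).
\end{equation*}
The monotone piece is bounded directly by $L(1+|X^n_{\kappa(n,t)}|)^2$. For the cross piece, use the taming bound $|b^n|\le L n^{1/2}(1+|X^n_{\kappa(n,t)}|)$ together with the representation
\begin{equation*}
X^n_t-X^n_{\kappa(n,t)} = \int_{\kappa(n,t)}^t b^n(X^n_{\kappa(n,s)},\alpha_{\kappa(n,s)})\,ds + \sum_{l=1}^m\int_{\kappa(n,t)}^t \tilde\sigma^{(l)}(s,X^n_{\kappa(n,s)},\alpha_{\kappa(n,s)})\,dW^l_s,
\end{equation*}
whose $L^p$-norm conditional on $\mathscr{F}_T^\alpha$ is of order $n^{-1/2}(1+E(|X^n_{\kappa(n,t)}|^p\mid\mathscr{F}_T^\alpha))^{1/p}$ by the sub-linear bound $|b^n|\le L n^{1/2}(1+|x|)$ combined with BDG and Corollary \ref{cor:tildaSigma}. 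Multiplying by the $n^{1/2}$ factor from the taming bound produces an $n^{1/2}\cdot n^{-1/2}=1$ balance, so after Young's inequality the cross piece is bounded by $K(1+|X^n_{\kappa(n,t)}|^2)$, up to a similar bound on $(1+|X^n_t|^2)$.

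Combining these ingredients with $(1+|X^n_t|^2)^{p/2-1}$ and using $1+|X^n_{\kappa(n,t)}|^2\le 1+\sup_{s\le t}|X^n_s|^2$, the setting
\begin{equation*}
\psi(t) := E\Bigl(\sup_{0\le s\le t}(1+|X^n_s|^2)^{p/2}\,\Big|\,\mathscr{F}_T^\alpha\Bigr)
\end{equation*}
yields, after applying BDG to $\sup_t |M^n_t|$ and absorbing a $\frac12\psi(t)$ term coming from the martingale bound into the left-hand side, an inequality of the form $\psi(t)\le K + K\int_0^t \psi(s)\,ds$ almost surely. Gronwall's lemma then gives $\psi(T)\le K$, which is the claim. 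The main obstacle is the cross-piece estimate: one has to verify that the $n^{1/2}$ taming bound on $b^n$ cancels exactly against the $n^{-1/2}$ size of the one-step increment of the scheme, and that the presence of the extra terms $\sigma_1,\sigma_2$ inside $\tilde\sigma$ does not spoil this balance — this is precisely what Corollary \ref{cor:tildaSigma} provides.
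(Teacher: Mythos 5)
Your proposal is correct and follows essentially the same route as the paper: Itô's formula applied to $(1+|x|^2)^{p/2}$, the split $X^n_s b^n = X^n_{\kappa(n,s)}b^n + (X^n_s-X^n_{\kappa(n,s)})b^n$ with the one-step increment represented via the scheme so that the $n^{1/2}$ taming bound cancels the $n^{-1/2}$ increment size, Corollary \ref{cor:tildaSigma} for the $\tilde\sigma$ terms, BDG with absorption of $\tfrac12$ of the supremum into the left-hand side, and Gronwall. No substantive differences from the paper's argument.
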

\begin{proof}
By It\^o's formula,
\begin{align}
(1+|X^n_t|^2)^{\frac{p}{2}}=&(1+|X_0^n|^2)^{\frac{p}{2}}+p\int^t_0(1+|X^n_s|^2)^{\frac{p-2}{2}}X^n_sb^n(X^n_{\kappa(n,s)},\alpha_{\kappa(n,s)})ds\nonumber
\\
&\quad+p\int^t_0(1+|X^n_s|^2)^{\frac{p-2}{2}}X^n_s\tilde{\sigma}(s,X^n_{\kappa(n,s)},\alpha_{\kappa(n,s)})dW_s\nonumber
\\
&\quad+\frac{p}{2}\int^t_0(1+|X^n_s|^2)^{\frac{p-2}{2}}|\tilde{\sigma}(s,X^n_{\kappa(n,s)},\alpha_{\kappa(n,s)})|^2ds\nonumber
\\
&\quad+\frac{p(p-2)}{2}\int^t_0(1+|X^n_s|^2)^{\frac{p-4}{2}}|\tilde{\sigma}^T(s,X^n_{\kappa(n,s)},\alpha_{\kappa(n,s)})X^n_s|^2ds\label{eq:X splitting}
\end{align}
almost surely for any $t\in[0,T]$ and $n\in\mathbb{N}$. By using Assumption $H$-5, 
\begin{align}
p\int^t_0(&1+|X^n_s|^2)^{\frac{p-2}{2}}X^n_sb^n(X^n_{\kappa(n,s)},\alpha_{\kappa(n,s)})ds\nonumber
\\
=&p\int^t_0(1+|X^n_s|^2)^{\frac{p-2}{2}}(X^n_s-X^n_{\kappa(n,s)})b^n(X^n_{\kappa(n,s)},\alpha_{\kappa(n,s)})ds\nonumber
\\
&\quad+p\int^t_0(1+|X^n_s|^2)^{\frac{p-2}{2}}X^n_{\kappa(n,s)}b^n(X^n_{\kappa(n,s)},\alpha_{\kappa(n,s)})ds\nonumber
\\
\leq&p\int^t_0(1+|X^n_s|^2)^{\frac{p-2}{2}}\int^s_{\kappa(n,s)}|b^n(X^n_{\kappa(n,r)},\alpha_{\kappa(n,r)})|^2drds\nonumber
\\
&\quad+p\int^t_0(1+|X^n_s|^2)^{\frac{p-2}{2}}\int^s_{\kappa(n,s)}\tilde{\sigma}(r,X^n_{\kappa(n,r)},\alpha_{\kappa(n,r)})dW_rb^n(X^n_{\kappa(n,s)},\alpha_{\kappa(n,s)})ds\nonumber
\\
&\quad+K\int^t_0(1+|X^n_s|^2)^{\frac{p-2}{2}}(1+|X^n_{\kappa(n,s)}|)^2ds\notag
\end{align}
which on substituting in equation \eqref{eq:X splitting} gives, 
\begin{align}
E\Big(& \sup_{0\leq t\leq u}(1+|X^n_t|^2)^{\frac{p}{2}}\big|\mathscr{F}_T^{\alpha}\Big)\leq E(1+|X_0^n|^2)^{\frac{p}{2}}+K\int^u_0E\Big(\sup_{0\leq r\leq s}(1+|X^n_r|^2)^{\frac{p}{2}}\big|\mathscr{F}_T^{\alpha}\Big)ds\nonumber
\\
&\quad+KE\Big(\int^u_0(1+|X^n_s|^2)^{\frac{p-2}{2}}\int^s_{\kappa(n,s)}|b^n(X^n_{\kappa(n,r)},\alpha_{\kappa(n,r)})|^2drds\big|\mathscr{F}_T^{\alpha}\Big)\nonumber
\\
&\quad+KE\Big(\int^u_0(1+|X^n_s|^2)^{\frac{p-2}{2}}\Big|\int^s_{\kappa(n,s)}\tilde{\sigma}(r,X^n_{\kappa(n,r)},\alpha_{\kappa(n,r)})dW_r\Big||b^n(X^n_{\kappa(n,s)},\alpha_{\kappa(n,s)})|ds\big|\mathscr{F}_T^{\alpha}\Big)\nonumber
\\
&\quad+pE\Big( \sup_{0\leq t\leq u}\big|\int^t_0(1+|X^n_s|^2)^{\frac{p-2}{2}}X^n_s\tilde{\sigma}(s,X^n_{\kappa(n,s)},\alpha_{\kappa(n,s)})dW_s\big| \Big|\mathscr{F}_T^{\alpha}\Big)\nonumber
\\
&\quad+KE\Big(\int^u_0(1+|X^n_s|^2)^{\frac{p-2}{2}}|\tilde{\sigma}(s,X^n_{\kappa(n,s)},\alpha_{\kappa(n,s)})|^2ds\big|\mathscr{F}_T^{\alpha}\Big)\nonumber
\\
=:&E(1+|X_0^n|^2)^{\frac{p}{2}}+K\int^u_0E\Big(\sup_{0\leq r\leq s}(1+|X^n_r|^2)^{\frac{p}{2}}\big|\mathscr{F}_T^{\alpha}\Big)ds+G_1+G_2+G_3+G_4 \label{eq:G_splitting}
\end{align}
almost surely for any $u \in[0,T]$ and $n\in\mathbb{N}$. By Assumption $H$-5, 
\begin{align}
G_1&:=KE\Big(\int^u_0(1+|X^n_s|^2)^{\frac{p-2}{2}}\int^s_{\kappa(n,s)}|b^n(X^n_{\kappa(n,r)},\alpha_{\kappa(n,r)})|^2drds\big|\mathscr{F}_T^{\alpha}\Big)\nonumber
\\
 & \leq  K\int^u_0E\Big(\sup_{0\leq r\leq s}(1+|X^n_r|^2)^{\frac{p}{2}}\big|\mathscr{F}_T^{\alpha}\Big)ds\label{eq:G_1}
\end{align}
almost surely for any $u\in[0,T]$ and $n\in\mathbb{N}$. Due to Young's inequality, martingale inequality, Assumption $H$-5 and Corollary \ref{cor:tildaSigma},  one obtains, 
\begin{align}
G_2:=&KE\Big(\int^u_0(1+|X^n_s|^2)^{\frac{p-2}{2}}\Big|\int^s_{\kappa(n,s)}\tilde{\sigma}(r,X^n_{\kappa(n,r)},\alpha_{\kappa(n,r)})dW_r\Big||b^n(X^n_{\kappa(n,s)},\alpha_{\kappa(n,s)})|ds\big|\mathscr{F}_T^{\alpha}\Big)\nonumber
\\
\leq&K\int^u_0E\Big(\sup_{0\leq r\leq s}(1+|X^n_r|^2)^{\frac{p}{2}}\big|\mathscr{F}_T^{\alpha}\Big)ds\nonumber
\\
&\quad+K\int^u_0E\Big(\Big|n^{\frac{1}{2}}\int^s_{\kappa(n,s)}\tilde{\sigma}(r,X^n_{\kappa(n,r)},\alpha_{\kappa(n,r)})dW_r\Big|^{\frac{p}{2}}|n^{-\frac{1}{2}}b^n(X^n_{\kappa(n,s)},\alpha_{\kappa(n,s)})|^{\frac{p}{2}}\big|\mathscr{F}_T^{\alpha}\Big)ds\nonumber
\\
\leq &K\int^u_0E\Big(\sup_{0\leq r\leq s}(1+|X^n_r|^2)^{\frac{p}{2}}\big|\mathscr{F}_T^{\alpha}\Big)ds+K\int^u_0E\Big(n^{-\frac{p}{2}}|b^n(X^n_{\kappa(n,s)},\alpha_{\kappa(n,s)})|^p\big|\mathscr{F}_T^{\alpha}\Big)ds \nonumber
\\
&\quad+K\int^u_0E\Big(\Big|n^{\frac{1}{2}}\int^s_{\kappa(n,s)}\tilde{\sigma}(r,X^n_{\kappa(n,r)},\alpha_{\kappa(n,r)})dW_r\Big|^p\big|\mathscr{F}_T^{\alpha}\Big)ds\nonumber
\\
\leq &K\int^u_0E\Big(\sup_{0\leq r\leq s}(1+|X^n_r|^2)^{\frac{p}{2}}\big|\mathscr{F}_T^{\alpha}\Big)ds+K\int^u_0n\int^s_{\kappa(n,s)}E\Big(|\tilde{\sigma}(r,X^n_{\kappa(n,r)},\alpha_{\kappa(n,r)})|^p\big|\mathscr{F}_T^{\alpha}\Big)drds\nonumber
\\
\leq &K\int^u_0E\Big(\sup_{0\leq r\leq s}(1+|X^n_r|^2)^{\frac{p}{2}}\big|\mathscr{F}_T^{\alpha}\Big)ds\label{eq:G_2}
\end{align}
almost surely for any $u\in[0,T]$ and $n\in\mathbb{N}$. The application of  Burkholder-Davis-Gundy inequality, Young's inequality,  H\"older's inequality and Corollary 3.1 implies, 
\begin{align}
G_3:=&pE\Big( \sup_{0\leq t\leq u}\big|\int^t_0(1+|X^n_s|^2)^{\frac{p-2}{2}}X^n_s\tilde{\sigma}(s,X^n_{\kappa(n,s)},\alpha_{\kappa(n,s)})dW_s\big| \Big|\mathscr{F}_T^{\alpha}\Big)\notag
\\
\leq&4p\sqrt{2}E\Big(\Big(\int^u_0(1+|X^n_s|^2)^{p-1}|\tilde{\sigma}(s,X^n_{\kappa(n,s)},\alpha_{\kappa(n,s)})|^2ds\Big)^{\frac{1}{2}} \Big|\mathscr{F}_T^{\alpha}\Big) \notag
\\
\leq&4p\sqrt{2}E\Big(\Big(\sup_{0\leq t \leq u}(1+|X^n_t|^2)^{p-1}\int^u_0|\tilde{\sigma}(s,X^n_{\kappa(n,s)},\alpha_{\kappa(n,s)})|^2ds\Big)^{\frac{1}{2}} \Big|\mathscr{F}_T^{\alpha}\Big) \notag
\\
\leq & \frac{1}{2}E\Big(\sup_{0\leq t \leq u}(1+|X^n_t|^2)^{\frac{p}{2}}\Big|\mathscr{F}_T^{\alpha}\Big)+KE\Big(\Big(\int^u_0|\tilde{\sigma}(s,X^n_{\kappa(n,s)},\alpha_{\kappa(n,s)})|^2ds\Big)^{\frac{p}{2}} \Big|\mathscr{F}_T^{\alpha}\Big) \notag
\\
\leq &\frac{1}{2}E\Big(\sup_{0\leq t \leq u}(1+|X^n_t|^2)^{\frac{p}{2}}\Big|\mathscr{F}_T^{\alpha}\Big)+KE\Big(\int^u_0|\tilde{\sigma}(s,X^n_{\kappa(n,s)},\alpha_{\kappa(n,s)})|^pds \Big|\mathscr{F}_T^{\alpha}\Big) \notag
\\
 \leq& \frac{1}{2}E\Big(\sup_{0\leq t \leq u}(1+|X^n_t|^2)^{\frac{p}{2}}\Big|\mathscr{F}_T^{\alpha}\Big)+K\int^u_0E\Big(\sup_{0\leq r \leq s}(1+|X^n_r|^2)^{\frac{p}{2}}\Big|\mathscr{F}_T^{\alpha}\Big)ds\label{eq:G_3Estimate}
\end{align}
almost surely for any $u\in[0,T]$ and $n\in\mathbb{N}$. Moreover, due to Young's inequality and Corollary \ref{cor:tildaSigma}, one obtains
\begin{align}
G_4:=&KE\Big(\int^u_0(1+|X^n_s|^2)^{\frac{p-2}{2}}|\tilde{\sigma}(s,X^n_{\kappa(n,s)},\alpha_{\kappa(n,s)})|^2ds\big|\mathscr{F}_T^{\alpha}\Big)\nonumber
\\
\leq &K\int^u_0E\Big(\sup_{0\leq r\leq s}(1+|X^n_r|^2)^{\frac{p}{2}}\big|\mathscr{F}_T^{\alpha}\Big)ds+K\int^u_0E\Big(|\tilde{\sigma}(s,X^n_{\kappa(n,s)},\alpha_{\kappa(n,s)})|^p\big|\mathscr{F}_T^{\alpha}\Big)ds\nonumber
\\
\leq &K\int^u_0E\Big(\sup_{0\leq r\leq s}(1+|X^n_r|^2)^{\frac{p}{2}}\big|\mathscr{F}_T^{\alpha}\Big)ds\label{eq:G_4}
\end{align}
almost surely for any $u\in[0,T]$ and $n\in\mathbb{N}$. On substituting estimates from \eqref{eq:G_1} to \eqref{eq:G_4} in \eqref{eq:G_splitting}, one obtains, 
\begin{align*}
E\Big(\sup_{0\leq t\leq u}(1+|X^n_t|^2)^{\frac{p}{2}}\big|\mathscr{F}_T^{\alpha}\Big)\leq& \frac{1}{2} E\Big(\sup_{0\leq t\leq u}(1+|X^n_t|^2)^{\frac{p}{2}}\big|\mathscr{F}_T^{\alpha}\Big) + E(1+|X_0^n|^2)^\frac{p}{2}
\\
& \quad+K\int^u_0E\Big(\sup_{0\leq r\leq s}(1+|X^n_r|^2)^{\frac{p}{2}}\big|\mathscr{F}_T^{\alpha}\Big)ds
\end{align*}
almost surely for any $u\in[0,T]$ and $n\in\mathbb{N}$. One finishes the proof by using Gronwall's lemma.
\end{proof}

\section{\bf{Proof of Main Result}.}
\noindent
The proof of main result \textit{i.e.}  Theorem \ref{thm:main} requires some useful lemmas which are  proved below. 
\begin{lemma}\label{lem:sigmaTermRate}
Let Assumptions $H$-2 to $H$-5 hold. Then,
\begin{align*}
E\Big(|\sigma_1(t,X^n_{\kappa(n,t)},\alpha_{\kappa(n,t)})|^2\big|\mathscr{F}_T^{\alpha}\Big)\leq&Kn^{-1}
\\
E\Big(|\sigma_2(t,X^n_{\kappa(n,t)},\alpha_{\kappa(n,t)})|^2\big|\mathscr{F}_T^{\alpha}\Big) &  \leq K \mathbb{I}\{N_{(\kappa(n,t),t)}=1\}\leq K
\end{align*}
almost surely for any $t\in[0,T]$ and $n\in \mathbb{N}$ where constant $K>0$ does not depend on $n$.
\end{lemma}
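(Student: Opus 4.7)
The plan is essentially a direct combination of the two results already in hand, so I would not attempt anything elaborate. Take $p=2$ in Lemma \ref{lem:diffu.bound}. This immediately yields
\begin{align*}
E\bigl(|\sigma_1(t,X^n_{\kappa(n,t)},\alpha_{\kappa(n,t)})|^2\bigm|\mathscr{F}_T^{\alpha}\bigr) &\leq K n^{-1}\, E\bigl(1+|X^n_{\kappa(n,t)}|^2 \bigm|\mathscr{F}_T^{\alpha}\bigr),\\
E\bigl(|\sigma_2(t,X^n_{\kappa(n,t)},\alpha_{\kappa(n,t)})|^2\bigm|\mathscr{F}_T^{\alpha}\bigr) &\leq K\,\mathbb{I}\{N_{(\kappa(n,t),t)}=1\}\, E\bigl(1+|X^n_{\kappa(n,t)}|^2 \bigm|\mathscr{F}_T^{\alpha}\bigr),
\end{align*}
almost surely, with $K$ independent of $n$.

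Next, I would invoke Lemma \ref{lem:SchemeMoment} with $p=2$, which gives $E(\sup_{0\leq t\leq T}|X^n_t|^2 \mid \mathscr{F}_T^{\alpha}) \leq K$ almost surely, with $K$ independent of $n$. In particular $E(1+|X^n_{\kappa(n,t)}|^2 \mid \mathscr{F}_T^{\alpha}) \leq K$ almost surely. Substituting this into the two inequalities above produces the claimed bounds, with $\mathbb{I}\{N_{(\kappa(n,t),t)}=1\} \leq 1$ giving the second form of the $\sigma_2$ estimate.

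The only thing to watch is that we are working with $\mathscr{F}_T^{\alpha}$-conditional moments throughout; both Lemma \ref{lem:diffu.bound} and Lemma \ref{lem:SchemeMoment} are stated in exactly that conditional form with constants uniform in $n$, so the chaining through is legitimate and no new estimate is required. There is no serious obstacle here; the lemma is really just the $p=2$ specialisation of Lemma \ref{lem:diffu.bound} after the $X^n$-dependence on the right-hand side has been absorbed by the scheme's moment bound.
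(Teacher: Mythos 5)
Your proposal is correct and is exactly the argument the paper intends: the paper's proof of this lemma is a one-line remark that it follows from the arguments of Lemma \ref{lem:diffu.bound} combined with Lemma \ref{lem:SchemeMoment}, which is precisely your specialisation to $p=2$ followed by absorbing the conditional moment $E\big(1+|X^n_{\kappa(n,t)}|^2\big|\mathscr{F}_T^{\alpha}\big)\leq K$. No gap; the hypotheses H-2 to H-5 cover both lemmas you invoke, and the conditional form of the bounds chains through as you note.
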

\begin{proof}
The proof of this lemma follows  from similar arguments as used in Lemma \ref{lem:diffu.bound} along with the application of Lemma \ref{lem:SchemeMoment}. 
\end{proof}

The following lemma is very useful in establishing the main result of this article. It can be regarded as a substitute for Lemma 2.2 of \cite{nguyen2017}  which is a kind of It\^o's formula for the function $g$ (see below). Later, in the poof of main result,  $g$ is taken to be  $b$ and $\sigma$.   Hence, it plays an important role in reducing the regularity requirements on the coefficients as It\^o's formula (i.e. Lemma 2.2 of \cite{nguyen2017}) requires second order derivatives of $g$.

\begin{lemma}\label{lem:for applying mvt}
Let $N_{(r,t)}$ be the number of jumps of the Markov chain $\alpha$ in the interval $(r,t)$ for any $0\leq r < t\leq T$. Also, suppose that  $\tau_1< \tau_2< \ldots< \tau_{N_{(r,t)}}$ are time of jumps of $\alpha$ in the interval $(r,t)$, where $t$ may or may not be the jump time. Let $\tau_0=r$ and $\tau_{N_{(r,t)}+1}=t$. Also, let $g(\cdot,i_0):\mathbb{R}^d \to \mathbb{R}^d$ be a measurable function for every $i_0 \in\mathcal{S}$. Then, almost surely, 
\begin{align*}
g(X_t,\alpha_t)&-g(X_r, \alpha_r)=\sum_{i_0\neq j_0}\int_r^t(g(X_u,j_0)-g(X_u,i_0))dM_{i_0j_0}(u)
\\
&\quad+\sum_{j_0\in\mathcal{S}}\int_r^t q_{\alpha_{u-}j_0}\big(g(X_u,j_0)-g(X_u,\alpha_{u-})\big)du + \sum_{k =0}^{N_{(r,t)}}\big( g(X_{\tau_{k+1}},\alpha_{\tau_{k}})-g(X_{\tau_{k}},\alpha_{\tau_{k}})  \big)
\end{align*}
for any $0\leq r<t\leq T$.  
\end{lemma}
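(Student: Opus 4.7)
The plan is to reduce the claim to a pathwise bookkeeping identity: telescope across the jump times of $\alpha$ in $(r,t)$, split each telescoped increment into a spatial part (with the chain frozen) and a chain-jump part (with $X$ frozen), then rewrite the chain-jump sum as a stochastic integral against the optional quadratic variations $[M_{i_0 j_0}]$, and finally invoke the Doob--Meyer decomposition $[M_{i_0 j_0}] = M_{i_0 j_0} + \langle M_{i_0 j_0}\rangle$.

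Concretely, set $\alpha_{\tau_{N_{(r,t)}+1}} := \alpha_t$ and write the trivial telescoping
\[
g(X_t,\alpha_t) - g(X_r,\alpha_r) = \sum_{k=0}^{N_{(r,t)}} \Bigl( g(X_{\tau_{k+1}},\alpha_{\tau_{k+1}}) - g(X_{\tau_k},\alpha_{\tau_k}) \Bigr).
\]
Each summand splits as
\[
g(X_{\tau_{k+1}},\alpha_{\tau_{k+1}}) - g(X_{\tau_k},\alpha_{\tau_k}) = \Bigl( g(X_{\tau_{k+1}},\alpha_{\tau_k}) - g(X_{\tau_k},\alpha_{\tau_k}) \Bigr) + \Bigl( g(X_{\tau_{k+1}},\alpha_{\tau_{k+1}}) - g(X_{\tau_{k+1}},\alpha_{\tau_k}) \Bigr).
\]
Summing the first parentheses over $k = 0,\ldots, N_{(r,t)}$ reproduces the third sum in the statement verbatim. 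For the second parentheses, right-continuity of $\alpha$ and its constancy on each $[\tau_k,\tau_{k+1})$ give $\alpha_{\tau_k} = \alpha_{\tau_{k+1}-}$ for $k \leq N_{(r,t)}-1$, while the $k = N_{(r,t)}$ term vanishes because $\alpha_t = \alpha_{\tau_{N_{(r,t)}}}$ on the almost-sure event that $t$ is not itself a jump time of $\alpha$. After reindexing $j := k+1$, the second parentheses collapse to
\[
\sum_{j=1}^{N_{(r,t)}} \Bigl( g(X_{\tau_j},\alpha_{\tau_j}) - g(X_{\tau_j},\alpha_{\tau_j-}) \Bigr),
\]
a pathwise sum over the jumps of $\alpha$ in $(r,t]$.

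Since $[M_{i_0 j_0}]$ increments by exactly one at each time $\alpha$ jumps from $i_0$ to $j_0$, this sum equals $\sum_{i_0 \neq j_0}\int_r^t (g(X_u,j_0)-g(X_u,i_0))\,d[M_{i_0 j_0}](u)$. Inserting $d[M_{i_0 j_0}](u) = dM_{i_0 j_0}(u) + q_{i_0 j_0}\mathbb{I}\{\alpha_{u-}=i_0\}\,du$ peels off the martingale term in the form required by the lemma; for the compensator piece, replacing $i_0$ by $\alpha_{u-}$ under the indicator and adjoining the (vacuous) $j_0 = \alpha_{u-}$ summand rewrites the double sum as $\int_r^t \sum_{j_0 \in \mathcal{S}} q_{\alpha_{u-} j_0}(g(X_u,j_0) - g(X_u,\alpha_{u-}))\,du$. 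The argument is pathwise bookkeeping combined with the definition of the compensator, so no analytic estimate is needed; the only genuinely delicate point is the edge behaviour at the endpoint $t$, which is settled by the almost-sure absence of a jump of $\alpha$ at a fixed deterministic time.
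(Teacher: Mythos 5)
Your proof is correct and follows essentially the same route as the paper's: telescope over the jump times, split each increment into a frozen-chain spatial part and a frozen-state chain-jump part, identify the latter pathwise with the integral against $[M_{i_0j_0}]$, and then decompose $[M_{i_0j_0}]=M_{i_0j_0}+\langle M_{i_0j_0}\rangle$ to extract the martingale and compensator terms. The only difference is that you spell out the endpoint behaviour at $t$ (the vanishing of the $k=N_{(r,t)}$ chain-jump term) more explicitly than the paper, which leaves this implicit.
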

\begin{proof}
Notice that  one can clearly write the following expression, almost surely, 
\begin{align}
g(X_t,\alpha_t)&-g(X_r, \alpha_r)=g(X_{\tau_{N_{(r,t)}+1}},\alpha_{\tau_{N_{(r,t)}+1})}-g(X_{\tau_{0}}, \alpha_{\tau_{0}}) =\sum_{k=0}^{N_{(r,t)}}\big( g(X_{\tau_{k+1}},\alpha_{\tau_{k+1}})-g(X_{\tau_{k}}, \alpha_{\tau_{k}})\big) \notag
\\
 =&\sum_{k=0}^{N_{(r,t)}}\big( g(X_{\tau_{k+1}},\alpha_{\tau_{k+1}})-g(X_{\tau_{k+1}},\alpha_{\tau_{k}})\big) + \sum_{k=0}^{N_{(r,t)}}\big(g(X_{\tau_{k+1}},\alpha_{\tau_{k}}) -g(X_{\tau_{k}}, \alpha_{\tau_{k}})\big) \label{eq:g}
\end{align}
for any  $0\leq r<t\leq T$. Moreover, the first term on the right hand side of the above equation can be written as,  
\begin{align}
&\sum_{i_0\neq j_0}\int_r^t  \big(g(X_u,j_0)-g(X_u,i_0)\big)dM_{i_0j_0}(u)
 =\sum_{i_0\neq j_0}\int_r^t \Big(g(X_u,j_0)-g(X_u,i_0)\Big)d[ M_{i_0j_0}](u) \notag
\\
&\quad- \sum_{i_0\neq j_0}\int_r^t \big(g(X_u,j_0)-g(X_u,i_0)\big)d\langle M_{i_0j_0}\rangle(u) \notag
\\
&=\sum_{k=0}^{N_{(r,t)}}\big( g(X_{\tau_{k+1}},\alpha_{\tau_{k+1}})-g(X_{\tau_{k+1}},\alpha_{\tau_{k}})\big) - \sum_{i_0\neq j_0} \int_r^t q_{i_0j_0}\mathbb{I}\{\alpha_{u-}=i_0\}\big(g(X_u,j_0)-g(X_u,i_0)\big)du \notag
\\
&=\sum_{k=0}^{N_{(r,t)}}\big( g(X_{\tau_{k+1}},\alpha_{\tau_{k+1})}-g(X_{\tau_{k+1}},\alpha_{\tau_{k}})\big) - \sum_{ j_0\in\mathcal{S}} \int_r^t q_{\alpha_{u-} j_0}\big(g(X_u,j_0)-g(X_u,\alpha_{u-})\big)du \label{eq:g1}
\end{align}
almost surely for  $0\leq r<t\leq T$. The proof is completed by substituting \eqref{eq:g1} in \eqref{eq:g}. 
\end{proof}
In addition to the above Lemma  \ref{lem:for applying mvt}, one also requires the following lemma to reduce the regularity requirements on the coefficients as its proof only depends once differentiability of $f$.  Later, in the proof of main result, $f$ is taken to $b$ and $\sigma$. 

\begin{lemma}\label{lem: mvt}
Let $f(\cdot,i_0):\mathbb{R}^d \rightarrow \mathbb{R}^d$ be a continuously differentiable function and satisfies, for every $i_0\in\mathcal{S}$,  
\begin{align} \label{eq:hyp}
|\mathcal{D}f(x,i_0)-\mathcal{D}f(\tilde{x},i_0)|\leq K(1+|x|+|\tilde{x}|)^{\gamma} |x-\tilde{x}| 
\end{align}
for any $x,\tilde{x}\in \mathbb{R}^d$ and for a fixed $\gamma\in \mathbb{R}$. Then, for every $i_0\in\mathcal{S}$,
\begin{equation*}
|f(x,i_0)-f(\tilde{x},i_0)-\mathcal{D} f(\tilde{x},i_0)(x-\tilde{x})|\leq K(1+|x|+|\tilde{x}|)^{\gamma}|x-\tilde{x}|^2
\end{equation*}
for any $x,\tilde{x}\in \mathbb{R}^d$. In the above, $K>0$ is  constant. 
\end{lemma}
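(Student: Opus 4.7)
The strategy is a standard Taylor remainder estimate via the fundamental theorem of calculus applied along the line segment joining $\tilde x$ to $x$. Since $f(\cdot,i_0)$ is continuously differentiable, the function $\theta \mapsto f(\tilde x + \theta(x-\tilde x), i_0)$ is $C^1$ on $[0,1]$, and hence
$$f(x,i_0) - f(\tilde x,i_0) = \int_0^1 \mathcal{D}f\bigl(\tilde x + \theta(x-\tilde x), i_0\bigr)(x-\tilde x)\, d\theta.$$
Rewriting $\mathcal{D}f(\tilde x,i_0)(x-\tilde x) = \int_0^1 \mathcal{D}f(\tilde x,i_0)(x-\tilde x)\, d\theta$ and subtracting yields
$$f(x,i_0) - f(\tilde x,i_0) - \mathcal{D}f(\tilde x,i_0)(x-\tilde x) = \int_0^1 \bigl[\mathcal{D}f(\tilde x + \theta(x-\tilde x),i_0) - \mathcal{D}f(\tilde x,i_0)\bigr](x-\tilde x)\, d\theta.$$

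Next I would take norms inside the integral and apply hypothesis \eqref{eq:hyp} with the pair of points $\tilde x + \theta(x-\tilde x)$ and $\tilde x$, whose distance is $\theta\,|x-\tilde x|$. This gives the pointwise bound
$$\bigl|\mathcal{D}f(\tilde x + \theta(x-\tilde x),i_0) - \mathcal{D}f(\tilde x,i_0)\bigr| \leq K\bigl(1 + |\tilde x + \theta(x-\tilde x)| + |\tilde x|\bigr)^{\gamma}\, \theta\, |x-\tilde x|.$$
Since $|\tilde x + \theta(x-\tilde x)| \leq |x| + |\tilde x|$ for $\theta \in [0,1]$, the polynomial weight is dominated (up to a constant depending on $\gamma$) by $(1+|x|+|\tilde x|)^{\gamma}$; combining this with $\int_0^1 \theta\, d\theta = 1/2$ and the trivial factor $|x-\tilde x|$ from the vector $(x-\tilde x)$ gives the asserted bound of order $(1+|x|+|\tilde x|)^{\gamma}|x-\tilde x|^2$.

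There is essentially no real obstacle: the statement is the one-dimensional second-order Taylor estimate carried along the segment, with the Lipschitz assumption on $\mathcal{D}f$ providing exactly the quadratic remainder control. The only minor care is in handling the polynomial weight uniformly in $\theta$, which is painless for $\gamma \geq 0$ (the regime used in the paper, namely $\gamma = \rho - 1$ for the drift via Assumption $H$-3 and $\gamma = 0$ for the diffusion); for $\gamma < 0$ one would instead use the trivial lower bound $1 + |\tilde x + \theta(x-\tilde x)| + |\tilde x| \geq 1$ and absorb the resulting loss into the constant.
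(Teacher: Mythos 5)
Your proof is correct and follows essentially the same route as the paper: both compare $\mathcal{D}f$ at an intermediate point of the segment $[\tilde x, x]$ with $\mathcal{D}f(\tilde x)$ and invoke the hypothesis \eqref{eq:hyp}, the only difference being that you use the integral (fundamental-theorem-of-calculus) form of the Taylor remainder while the paper invokes the mean value theorem with a single intermediate point $qx+(1-q)\tilde x$. Your integral version is in fact slightly more careful, since the mean value theorem with one common $q$ does not literally hold for vector-valued $f$ (it must be applied componentwise), a subtlety your formulation sidesteps.
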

\begin{proof}
For every $i_0\in\mathcal{S}$, due to mean value theorem, 
\begin{align*}
f(x,i_0)-f(\tilde{x}, i_0)=\mathcal{D} f(qx+(1-q)\tilde{x},i_0)(x-\tilde{x})
\end{align*}
for some $q\in (0,1)$ which on using  hypothesis  \eqref{eq:hyp}  further implies, 
\begin{align*}
|f(x,i_0)&-f(\tilde{x},i_0)-\mathcal{D} f(\tilde{x},i_0)(x-\tilde{x})|
\\
=&\Big|\mathcal{D} f(qx+(1-q)\tilde{x},i_0)(x-\tilde{x})- \mathcal{D} f(\tilde{x},i_0)(x-\tilde{x}) \Big|
\\
\leq & K(1+|qx+(1-q)\tilde{x}|+|\tilde{x}|)^{\gamma}|x-\tilde{x}|^2 \leq  K(1+|x|+|\tilde{x}|)^{\gamma}|x-\tilde{x}|^2 
\end{align*}
for any $x,\tilde{x}\in \mathbb{R}^d$. This completes the proof. 
\end{proof}
The proof of parts (a) and (b) of the following lemma can be found in \cite{nguyen2017}. For the completeness, their proofs are given below along with that of part (c).  
\begin{lemma}\label{lem:EN_i, EN_i^2 }
Let $N_{(\kappa(n,t),t)}$ be the number of jumps of the Markov chain $\alpha$ in the interval $(\kappa(n,t), t)$ for any $t\in[0,T]$. Set $q:=\max \{-q_{i_0i_0}; i_0 \in \mathcal{S}\}$.  Then, 
\newline
(a).  $P(N_{(\kappa(n,t),t)}\geq N)\leq q^Nh^N$ whenever $N\geq 1$,
\newline
(b). $EN_{(\kappa(n,t),t)} \leq K h$ whenever $h<1/(2q)$, where constant $K>0$ is  independent of $h$, and 
\newline
(c). $ EN_{(\kappa(n,t),t)}^2 \leq 6$. 
\end{lemma}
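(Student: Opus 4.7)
The plan is to establish part (a) by stochastically dominating the inter-jump times of $\alpha$ by i.i.d.\ Exp$(q)$ random variables, and then deduce (b) and (c) using the tail-sum identities $EX = \sum_{N=1}^\infty P(X \geq N)$ and $EX^2 = \sum_{N=1}^\infty (2N-1) P(X \geq N)$ valid for every nonnegative integer-valued random variable $X$.

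For (a), fix $\alpha_{\kappa(n,t)} = i_0 \in \mathcal{S}$. By the strong Markov property of $\alpha$, the time from $\kappa(n,t)$ until its first jump is exponentially distributed with rate $-q_{i_0 i_0} \leq q$; iterating at each jump, the successive inter-jump times are, conditional on the sequence of visited states, independent exponentials with rates bounded above by $q$. Since Exp$(\lambda)$ stochastically dominates Exp$(q)$ whenever $\lambda \leq q$, the $N$-th jump time $\tau_N$ (measured from $\kappa(n,t)$) stochastically dominates a sum $S_N$ of $N$ i.i.d.\ Exp$(q)$ random variables, i.e.\ a Gamma$(N,q)$ random variable. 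Using $\{N_{(\kappa(n,t),t)} \geq N\} \subseteq \{\tau_N \leq h\}$, the estimate
\begin{align*}
P(N_{(\kappa(n,t),t)} \geq N) \leq P(S_N \leq h) = \int_0^h \frac{q^N s^{N-1}}{(N-1)!}\, e^{-qs}\, ds \leq \frac{(qh)^N}{N!} \leq (qh)^N
\end{align*}
proves (a).

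Parts (b) and (c) then follow by summing the tail bounds. Under $h < 1/(2q)$, i.e.\ $qh < 1/2$, one has $E N_{(\kappa(n,t),t)} \leq \sum_{N=1}^\infty (qh)^N = qh/(1-qh) \leq 2qh$, which gives (b) with $K = 2q$; and
\begin{align*}
E N_{(\kappa(n,t),t)}^2 \leq \sum_{N=1}^\infty (2N-1)(qh)^N = \frac{qh(1+qh)}{(1-qh)^2},
\end{align*}
which is increasing in $qh$ on $[0,1)$ and equals $3$ at $qh = 1/2$, hence is bounded by $3 \leq 6$.

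The only non-routine ingredient is the stochastic-domination step in (a); everything else reduces to evaluating elementary geometric-type series. An alternative approach to (a) that avoids explicit coupling is an induction on $N$, conditioning on the first jump time in $(\kappa(n,t), t)$ and using the base case $P(\alpha \text{ has at least one jump in an interval of length } h \mid \alpha_{\kappa(n,t)} = i_0) = 1 - e^{q_{i_0 i_0} h} \leq qh$; both routes yield the same bound.
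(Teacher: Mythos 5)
Your proof is correct and follows essentially the same route as the paper: both arguments rest on the observation that, conditionally on the sequence of visited states, the inter-jump times are independent exponentials with rates bounded by $q$, and then both sum the resulting geometric-type tail bounds to get (b) and (c). The only differences are cosmetic refinements --- for (a) you bound $P(\tau_N\leq h)$ by a Gamma$(N,q)$ tail (gaining a harmless factor $1/N!$) where the paper uses $P\big(\sum_{r=0}^{N-1}(\tau_{r+1}-\tau_r)<h\big)\leq \prod_{r=0}^{N-1}P(\tau_{r+1}-\tau_r<h)$ together with $1-e^{q_{i_ri_r}h}\leq qh$, and for (c) you use the exact identity $EX^2=\sum_{N\geq 1}(2N-1)P(X\geq N)$ where the paper uses the cruder bound $EX^2\leq\sum_{N\geq1}N^2P(X\geq N)=6$ at $qh=1/2$ --- so both versions yield the stated estimates.
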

\begin{proof}
Let us denote by  $\tau_1,\ldots,\tau_{N_{(\kappa(n,t),t)}}$ the jump-times of the Markov chain $\alpha$ in  $(\kappa(n,t),t)$. Set $\tau_0=\kappa(n,t)$ and $\tau_{N_{(\kappa(n,t),t)}+1}=t$. Obviously, on the set  $\{N_{(\kappa(n,t),t)}\geq 1\}$, time between successive jumps i.e. $\tau_1-\tau_0$, $\tau_2-\tau_1$, $\tau_3-\tau_2$, $\ldots$, $\tau_{N_{(\kappa(n,t),t)}-1}-\tau_{N_{(\kappa(n,t),t)}}$, $\tau_{N_{(\kappa(n,t),t)}+1}-\tau_{N_{(\kappa(n,t),t)}}$ are conditionally independent random variables. Also, if $N_{(\kappa(n,t),t)}\geq 1$ and the Markov chain jumps from state $i_{r-1}$ to $i_r$ at time $\tau_r$, then $\tau_{r+1}-\tau_r$ follows  exponential distribution with parameter $-q_{i_ri_r}$ for $r=1,\ldots,N_{(\kappa(n,t),t)}$. Therefore,  strong Markov property of $\alpha$ implies that,  for any $N\geq 1$ and for any $t\in[0,T]$
\begin{align*}
P(N_{(\kappa(n,t),t)}\geq N)& \leq P\Big(\sum_{r=0}^{N-1} (\tau_{r+1}-\tau_r)<h\Big)\leq \prod_{r=0}^{N-1}P(\tau_{r+1}-\tau_r<h) 
\\
& \leq \prod_{r=0}^{N-1}(1-e^{q_{i_r i_r}h}) \leq \prod_{r=0}^{N-1} (-q_{i_r i_r}h) \leq q^Nh^N. 
\end{align*}
This proves part (a). As a consequence of part (a),   for any $t\in[0,T]$, 
\begin{align*}
EN_{(\kappa(n,t),t)}=\sum_{N=1}^\infty P(N_{(\kappa(n,t),t)}\geq N) \leq \sum_{N=1}^\infty q^Nh^N \leq q h  \sum_{N=0}^\infty (1/2)^N \leq K h. 
\end{align*}    
This shows pat (b). Moreover, for any $t\in[0,T]$, 
\begin{align*}
EN_{(\kappa(n,t),t)}^2=&\sum_{N=1}^\infty  N^2 P(N_{(\kappa(n,t),t)}=N)\leq \sum_{N=1}^\infty N^2 P(N_{(\kappa(n,t),t)}\geq N) \leq  \sum_{N=1}^\infty N^2 q^N h^N 
\\
\leq& \sum_{N=1}^\infty N^2 (1/2)^N =6.
\end{align*}
This proves part (c). 
\end{proof}
The following lemma holds for any $p_0\geq 2$ and later it will be used for $p_0=2,4,8$. 
\begin{lemma}\label{lem:scheme_differ_rate}
Let Assumptions $H$-2 to $H$-5 be satisfied.  Then, for any $\mathscr{F}_T^\alpha$ -measurable positive random variables $\tau_1$ and $\tau_2$ satisfying $\tau_1<\tau_2$ almost surely, the following holds,  
\begin{equation}
E\Big(\sup_{0\leq t \leq T}|X^n_{t\wedge\tau_2}-X^n_{t\wedge \tau_1}|^{p_0}\Big|\mathscr{F}_T^{\alpha}\Big)\leq K(\tau_2-\tau_1)^{\frac{p_0}{2}}+K(\tau_2-\tau_1)^{p_0}  \notag
\end{equation}
almost surely for any $n\in \mathbb{N}$.
\end{lemma}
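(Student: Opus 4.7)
The plan is as follows. First, observe that $X^n_{t\wedge\tau_2}-X^n_{t\wedge\tau_1}=0$ for $t\le\tau_1$ and equals $X^n_{\tau_2}-X^n_{\tau_1}$ for $t\ge\tau_2$, so the supremum collapses to
\[
\sup_{0\le t\le T}\big|X^n_{t\wedge\tau_2}-X^n_{t\wedge\tau_1}\big|^{p_0}=\sup_{\tau_1\le t\le\tau_2}\big|X^n_t-X^n_{\tau_1}\big|^{p_0}.
\]
Using the continuous-version representation \eqref{eq:scheme}, I would write $X^n_t-X^n_{\tau_1}$ as the sum of a Lebesgue integral of $b^n(X^n_{\kappa(n,\cdot)},\alpha_{\kappa(n,\cdot)})$ and a stochastic integral of $\tilde\sigma^{(l)}(\cdot,X^n_{\kappa(n,\cdot)},\alpha_{\kappa(n,\cdot)})$ against $W^l$ over $[\tau_1,t]$, and split the $p_0$-th power via $|a+b|^{p_0}\le 2^{p_0-1}(|a|^{p_0}+|b|^{p_0})$.

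The crucial point, which I expect to be the main obstacle, is that $\tau_1,\tau_2$ are only $\mathscr{F}_T^\alpha$-measurable and are therefore not $\mathscr{F}_t$-stopping times, so Burkholder-Davis-Gundy does not apply directly to an integral with these random endpoints. I would resolve this by exploiting the independence of $W$ and $\alpha$: conditioning on $\mathscr{F}_T^\alpha$ turns $\tau_1$ and $\tau_2$ into deterministic constants while preserving the law of $W$ as a standard Wiener process, and the integrands $b^n(X^n_{\kappa(n,\cdot)},\alpha_{\kappa(n,\cdot)})$, $\tilde\sigma^{(l)}(\cdot,X^n_{\kappa(n,\cdot)},\alpha_{\kappa(n,\cdot)})$ become adapted to the Brownian filtration. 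Under this conditioning the stochastic integral on $[\tau_1,\tau_2]$ is a genuine It\^o integral and BDG applies as usual. This is the same conditioning device already employed in Lemmas \ref{lem:diffu.bound} and \ref{lem:SchemeMoment}.

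For the drift piece, the conditional H\"older inequality (with $\tau_1,\tau_2$ now deterministic under the conditioning) yields
\[
E\Big(\sup_{\tau_1\le t\le\tau_2}\Big|\int_{\tau_1}^t b^n(X^n_{\kappa(n,s)},\alpha_{\kappa(n,s)})ds\Big|^{p_0}\Big|\mathscr{F}_T^\alpha\Big)\le (\tau_2-\tau_1)^{p_0-1}\int_{\tau_1}^{\tau_2}E\big(|b^n|^{p_0}\big|\mathscr{F}_T^\alpha\big)\,ds,
\]
and the polynomial growth $|b^n(x,i_0)|\le L(1+|x|)^{\rho_1+1}$ from Assumption $H$-5, together with Lemma \ref{lem:SchemeMoment} applied with $p=p_0(\rho_1+1)$, bounds this by $K(\tau_2-\tau_1)^{p_0}$. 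For the diffusion piece, conditional BDG followed by H\"older gives
\[
E\Big(\sup_{\tau_1\le t\le\tau_2}\Big|\int_{\tau_1}^t\tilde\sigma^{(l)}(s,\ldots)\,dW^l_s\Big|^{p_0}\Big|\mathscr{F}_T^\alpha\Big)\le K(\tau_2-\tau_1)^{\frac{p_0}{2}-1}\int_{\tau_1}^{\tau_2}E\big(|\tilde\sigma|^{p_0}\big|\mathscr{F}_T^\alpha\big)\,ds,
\]
and Corollary \ref{cor:tildaSigma} together with Lemma \ref{lem:SchemeMoment} controls the integrand uniformly, producing $K(\tau_2-\tau_1)^{p_0/2}$. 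Adding the two contributions and taking a final (unconditional) supremum inside the expectation yields the stated inequality.
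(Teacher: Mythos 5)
Your proposal is correct and follows essentially the same route as the paper: split $X^n_{t\wedge\tau_2}-X^n_{t\wedge\tau_1}$ into the drift and stochastic integrals over $[t\wedge\tau_1,t\wedge\tau_2]$, apply H\"older and (conditional) Burkholder--Davis--Gundy, and control the integrands via the polynomial growth bound on $b^n$ from Assumption $H$-5, Corollary \ref{cor:tildaSigma} and the moment bound of Lemma \ref{lem:SchemeMoment}. The paper leaves the conditioning device implicit, whereas you spell out why $\mathscr{F}_T^\alpha$-measurability of $\tau_1,\tau_2$ (rather than the stopping-time property) suffices; this is a welcome clarification but not a different argument.
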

\begin{proof}
The application of H\"older's inequality, Burkholder-Davis-Gundy inequality, Assumption $H$-5, Corollary \ref{cor:tildaSigma} and Lemma \ref{lem:SchemeMoment}   implies,
\begin{align*}
E\Big(\sup_{0\leq t \leq T}|X^n_{t\wedge\tau_2}-X^n_{t\wedge \tau_1}|^{p_0}&\Big|\mathscr{F}_T^{\alpha}\Big)\leq  KE\Big(\sup_{0\leq t \leq T}\Big|\int^{t\wedge \tau_2}_{t\wedge \tau_1}b^n(X^n_{\kappa(n,s)},\alpha_{\kappa(n,s)})ds\Big|^{p_0}\Big|\mathscr{F}_T^{\alpha}\Big)
\\
&\quad+KE\Big(\sup_{0\leq t \leq T}\Big|\int^{t\wedge \tau_2}_{t\wedge \tau_1}\tilde{\sigma}(s,X^n_{\kappa(n,s)},\alpha_{\kappa(n,s)})dW_s\Big|^{p_0}\Big|\mathscr{F}_T^{\alpha}\Big)
\\
\leq &K(\tau_2- \tau_1)^{p_0-1}\int^{\tau_2}_{ \tau_1}E\Big(|b^n(X^n_{\kappa(n,s),\alpha_{\kappa(n,s)}})|^{p_0}\Big|\mathscr{F}_T^{\alpha}\Big)ds
\\
&\quad+K( \tau_2-\tau_1)^{\frac{p_0}{2}-1}\int^{\tau_2}_{\tau_1}E\Big(|\tilde{\sigma}(s,X^n_{\kappa(n,s)},\alpha_{\kappa(n,s)})|^{p_0}\Big|\mathscr{F}_T^{\alpha}\Big)ds
\\
\leq & K(\tau_2-\tau_1)^{\frac{p_0}{2}}+K(\tau_2-\tau_1)^{p_0}
\end{align*}
almost surely for any $n\in \mathbb{N}$.
\end{proof}
\begin{lemma}\label{lem:sigmaCrusial}
Let Assumptions $H$-2 to $H$-5 be satisfied. Then,
\begin{align*}
\sup_{0\leq t \leq T}E\big|\sigma(X^n_t,\alpha_t)-\tilde{\sigma}(t,X^n_{\kappa(n,t)},\alpha_{\kappa(n,t)})\big|^2\leq Kn^{-2}
\end{align*}
for any $n\in \mathbb{N}$, where $0<h=(1/n)<1/(2q)$ with $q=\max \{-q_{i_0i_0}; i_0 \in \mathcal{S}\}$.
\end{lemma}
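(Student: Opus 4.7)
The plan is to split on the number $N:=N_{(\kappa(n,t),t)}$ of Markov-chain jumps in the sub-interval containing $t$, writing
\begin{align*}
E|\sigma(X^n_t,\alpha_t)-\tilde\sigma|^2 &= E[|\sigma(X^n_t,\alpha_t)-\tilde\sigma|^2\mathbb{I}\{N=0\}] + E[|\sigma(X^n_t,\alpha_t)-\tilde\sigma|^2\mathbb{I}\{N=1\}] \\
&\quad + E[|\sigma(X^n_t,\alpha_t)-\tilde\sigma|^2\mathbb{I}\{N\geq 2\}],
\end{align*}
and bounding each summand by $Kn^{-2}$. Since the factor $\mathbb{I}\{N_{(\kappa(n,t),t)}=1\}$ sits at the outer time $t$ in the definition of $\sigma_2(t,\cdot)$, one has $\sigma_2(t,\cdot)\equiv 0$ on $\{N\neq 1\}$, while on $\{N=1\}$ the integrator $d[M_{i_0j_0}]$ isolates the unique jump and $\sigma_2(t,\cdot)=\sigma(X^n_{\kappa(n,t)},\alpha_t)-\sigma(X^n_{\kappa(n,t)},\alpha_{\kappa(n,t)})$ exactly.

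On $\{N=0\}$ the chain is constant on $[\kappa(n,t),t]$, so $\alpha_t=\alpha_{\kappa(n,t)}$ and $\sigma_2(s,\cdot)\equiv 0$ for every $s\in(\kappa(n,t),t)$; the scheme reduces to
\[
X^n_t-X^n_{\kappa(n,t)} = b^n(t-\kappa(n,t)) + \sum_{l_1}\sigma^{(l_1)}(W^{l_1}_t-W^{l_1}_{\kappa(n,t)}) + \sum_{l_1}\int_{\kappa(n,t)}^t\sigma_1^{(l_1)}(s,\cdot)dW^{l_1}_s,
\]
with the arguments of $b^n,\sigma^{(l_1)},\mathcal{D}\sigma$ frozen at $(X^n_{\kappa(n,t)},\alpha_{\kappa(n,t)})$, while $\sigma_1^{(l)}(t,\cdot)$ itself collapses to $\sum_{l_1}\mathcal{D}\sigma^{(l)}\sigma^{(l_1)}(W^{l_1}_t-W^{l_1}_{\kappa(n,t)})$. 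Applying Lemma \ref{lem: mvt} with $\gamma=0$, whose hypothesis is supplied by the Lipschitz clause on $\mathcal{D}\sigma^{(l)}$ in Assumption $H$-3, to $\sigma^{(l)}(\cdot,\alpha_{\kappa(n,t)})$ at $X^n_{\kappa(n,t)}$ shows that the first-order Taylor piece $\mathcal{D}\sigma^{(l)}(X^n_t-X^n_{\kappa(n,t)})$ cancels $\sigma_1^{(l)}(t,\cdot)$ exactly, leaving three residuals: (i) $(t-\kappa(n,t))\mathcal{D}\sigma^{(l)}b^n$, of order $n^{-1}$ in $L^2$ by Assumption $H$-5 and Lemma \ref{lem:SchemeMoment}; (ii) $\sum_{l_1}\mathcal{D}\sigma^{(l)}\int_{\kappa(n,t)}^t\sigma_1^{(l_1)}(s,\cdot)dW^{l_1}_s$, controlled by It\^o isometry together with the bound $E|\sigma_1|^2\leq Kn^{-1}$ from Lemma \ref{lem:sigmaTermRate}; and (iii) a Taylor remainder of magnitude $|X^n_t-X^n_{\kappa(n,t)}|^2$, handled by Lemma \ref{lem:scheme_differ_rate} with $p_0=4$. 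Each contributes an $L^2$ bound of size $Kn^{-2}$.

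On $\{N=1\}$ the $\sigma_2$-cancellation collapses the target to $\sigma(X^n_t,\alpha_t)-\sigma(X^n_{\kappa(n,t)},\alpha_t)-\sigma_1(t,\cdot)$; conditioning on $\mathscr{F}_T^{\alpha}$ (with respect to which $\mathbb{I}\{N=1\}$ is measurable), the global Lipschitz property of $\sigma$ combined with Lemma \ref{lem:scheme_differ_rate} (for $p_0=2$) and Lemma \ref{lem:sigmaTermRate} produces a conditional $L^2$ bound of order $n^{-1}$, which together with $P(N=1)\leq Kn^{-1}$ from Lemma \ref{lem:EN_i, EN_i^2 }(b) gives $Kn^{-2}$. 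On $\{N\geq 2\}$ the crude bound $|\sigma-\tilde\sigma|^2\leq K(1+|X^n_t|^2+|X^n_{\kappa(n,t)}|^2)+2|\sigma_1|^2$ has $\mathscr{F}_T^{\alpha}$-conditional expectation uniformly bounded by Lemmas \ref{lem:SchemeMoment} and \ref{lem:sigmaTermRate}, while Lemma \ref{lem:EN_i, EN_i^2 }(a) supplies $P(N\geq 2)\leq q^2h^2\leq Kn^{-2}$. The main delicacy lies in Case $\{N=0\}$: making the It\^o-Taylor cancellation between $\mathcal{D}\sigma^{(l)}(X^n_t-X^n_{\kappa(n,t)})$ and $\sigma_1^{(l)}(t,\cdot)$ explicit, which is exactly what lets the Milstein rate be attained under the once-differentiability hypothesis of $H$-3 rather than the twice-differentiable hypotheses used in \cite{nguyen2017}.
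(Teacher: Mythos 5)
Your proof is correct, and it is organized quite differently from the paper's. The paper first applies Lemma \ref{lem:for applying mvt} to obtain a single pathwise identity for $\sigma^{(l)}(X^n_t,\alpha_t)-\sigma^{(l)}(X^n_{\kappa(n,t)},\alpha_{\kappa(n,t)})$ valid on every event, substitutes the scheme into the inter-jump increments, identifies $\sigma_1^{(l)}$ and $\sigma_2^{(l)}$ among the resulting terms, and then estimates the eight residual terms $T_1,\dots,T_8$ one by one; the Taylor expansion via Lemma \ref{lem: mvt} is carried out uniformly over all jump configurations. You instead partition on the jump count $N_{(\kappa(n,t),t)}$: the exact cancellations ($\sigma_1$ against the first-order Gaussian part of the increment via Lemma \ref{lem: mvt} with $\gamma=0$, and $\sigma_2$ against the single switch of $\sigma$) are exhibited only where they are needed, namely on $\{N=0\}$ and $\{N=1\}$ respectively, while on $\{N=1\}$ and $\{N\geq 2\}$ crude Lipschitz/growth bounds suffice because the probabilities $P(N\geq 1)\leq qh$ and $P(N\geq 2)\leq q^2h^2$ from Lemma \ref{lem:EN_i, EN_i^2 }(a) already supply the missing powers of $n^{-1}$ (you cite part (b) for $P(N=1)\leq Kn^{-1}$, but this is really part (a) with $N=1$ --- a harmless misattribution). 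What your route buys is a shorter argument in which the Taylor remainder, the $\mathcal{D}\sigma$-freezing error, and the $\sigma_1,\sigma_2$-correction terms never have to be tracked simultaneously with the jump structure; what the paper's route buys is a reusable template --- the same identity of Lemma \ref{lem:for applying mvt} is deployed again for the drift in Lemma \ref{lem:bCrusial}, where the analogue of your event-splitting would be harder to combine with the inner-product structure of that estimate. Both proofs rest on the same supporting lemmas (\ref{lem:SchemeMoment}, \ref{lem:sigmaTermRate}, \ref{lem: mvt}, \ref{lem:EN_i, EN_i^2 }, \ref{lem:scheme_differ_rate}) and the same two cancellation mechanisms, so the difference is one of bookkeeping rather than of substance.
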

\begin{proof}
Due to Lemma \ref{lem:for applying mvt} along with $M_{i_0j_0}(t)= [M_{i_0 j_0}](t)-\langle M_{i_0j_0}\rangle(t)$, for any $i_0,j_0\in \mathcal{S}$ and $t\in[0,T]$, one obtains
\begin{align*}
\sigma^{(l)}&(X^n_t,\alpha_t)-\sigma^{(l)}(X^n_{\kappa(n,t)},\alpha_{\kappa(n,t)})=\sum_{i_0\neq j_0}\int^t_{\kappa(n,t)}\Big(\sigma^{(l)}(X^n_u,j_0)-\sigma^{(l)}(X^n_u,i_0)\Big)dM_{i_0j_0}(u)\nonumber
\\
&\quad+\sum_{j_0\in \mathcal{S}}\int^t_{\kappa(n,t)}q_{\alpha_{u-}j_0}\Big(\sigma^{(l)}(X^n_u,j_0)-\sigma^{(l)}(X^n_u,\alpha_{u-})\Big)du\nonumber
\\
&\quad+\sum^{N_{(\kappa(n,t),t)}}_{k=0}\Big( \sigma^{(l)}(X^n_{\tau_{k+1}},\alpha_{\tau_k})-\sigma^{(l)}(X^n_{\tau_k},\alpha_{\tau_k}) \Big)\nonumber
\\
=&\sum_{i_0\neq j_0}\int^t_{\kappa(n,t)}\Big(\sigma^{(l)}(X^n_u,j_0)-\sigma^{(l)}(X^n_u,i_0)\Big)d[M_{i_0j_0}](u)\nonumber
\\
&\quad+\sum_{i_0\neq j_0}\int^t_{\kappa(n,t)}\Big(\sigma^{(l)}(X^n_u,i_0)-\sigma^{(l)}(X^n_u,j_0)\Big)d\langle M_{i_0j_0}\rangle (u)\nonumber
\\
&\quad+\sum_{j_0\in \mathcal{S}}\int^t_{\kappa(n,t)}q_{\alpha_{u-}j_0}\Big(\sigma^{(l)}(X^n_u,j_0)-\sigma^{(l)}(X^n_u,\alpha_{u-})\Big)du\nonumber
\\
&\quad+\sum^{N_{(\kappa(n,t),t)}}_{k=0}\Big( \sigma^{(l)}(X^n_{\tau_{k+1}},\alpha_{\tau_k})-\sigma^{(l)}(X^n_{\tau_k},\alpha_{\tau_k})-\mathcal{D}\sigma^{(l)}(X^n_{\tau_k},\alpha_{\tau_k})(X^n_{\tau_{k+1}}-X^n_{\tau_k}) \Big)\nonumber
\\
&\quad+\sum^{N_{(\kappa(n,t),t)}}_{k=0}\int^{\tau_{k+1}}_{\tau_k}\mathcal{D}\sigma^{(l)}(X^n_{\tau_k},\alpha_{\tau_k})b^n(X^n_{\kappa(n,u)},\alpha_{\kappa(n,u)})du\nonumber
\\
&\quad+\sum_{l_1=1}^m\sum^{N_{(\kappa(n,t),t)}}_{k=0}\int^{\tau_{k+1}}_{\tau_k}\mathcal{D}\sigma^{(l)}(X^n_{\tau_k},\alpha_{\tau_k})\tilde{\sigma}^{(l_1)}(u,X^n_{\kappa(n,u)},\alpha_{\kappa(n,u)})dW_u^{l_1}\nonumber
\\
=&\mathbb{I}\{ N_{(\kappa(n,t),t)}=1 \}\sum_{i_0\neq j_0}\int^t_{\kappa(n,t)}\Big(\big(\sigma^{(l)}(X^n_u,j_0)-\sigma^{(l)}(X^n_u,i_0)\big)\notag
\\
&\quad\qquad-\big(\sigma^{(l)}(X^n_{\kappa(n,u)},j_0)-\sigma^{(l)}(X^n_{\kappa(n,u)},i_0)\big)\Big)d[M_{i_0j_0}](u)\nonumber
\\
&+\mathbb{I}\{ N_{(\kappa(n,t),t)}=1 \}\sum_{i_0\neq j_0}\int^t_{\kappa(n,t)}\Big(\sigma^{(l)}(X^n_{\kappa(n,u)},j_0)-\sigma^{(l)}(X^n_{\kappa(n,u)},i_0)\Big)d[M_{i_0j_0}](u)\nonumber
\\
&+\mathbb{I}\{ N_{(\kappa(n,t),t)}\geq2 \}\sum_{i_0\neq j_0}\int^t_{\kappa(n,t)}\Big(\sigma^{(l)}(X^n_u,j_0)-\sigma^{(l)}(X^n_u,i_0)\Big)d[M_{i_0j_0}](u)\nonumber
\\
&+\sum_{i_0\neq j_0}\int^t_{\kappa(n,t)}\Big(\sigma^{(l)}(X^n_u,i_0)-\sigma^{(l)}(X^n_u,j_0)\Big)d\langle M_{i_0j_0}\rangle (u)\nonumber
\\
&+\sum_{j_0\in \mathcal{S}}\int^t_{\kappa(n,t)}q_{\alpha_{u-}j_0}\Big(\sigma^{(l)}(X^n_u,j_0)-\sigma^{(l)}(X^n_u,\alpha_{u-})\Big)du\nonumber
\\
&+\sum^{N_{(\kappa(n,t),t)}}_{k=0}\Big( \sigma^{(l)}(X^n_{\tau_{k+1}},\alpha_{\tau_k})-\sigma^{(l)}(X^n_{\tau_k},\alpha_{\tau_k})-\mathcal{D}\sigma^{(l)}(X^n_{\tau_k},\alpha_{\tau_k})(X^n_{\tau_{k+1}}-X^n_{\tau_k}) \Big)\nonumber
\\
&+\sum^{N_{(\kappa(n,t),t)}}_{k=0}\int^{\tau_{k+1}}_{\tau_k}\mathcal{D}\sigma^{(l)}(X^n_{\tau_k},\alpha_{\tau_k})b^n(X^n_{\kappa(n,u)},\alpha_{\kappa(n,u)})du\nonumber
\\
&+\sum^m_{l_1=1}\sum^{N_{(\kappa(n,t),t)}}_{k=0}\int^{\tau_{k+1}}_{\tau_k}\Big( \mathcal{D}\sigma^{(l)}(X^n_{\tau_k},\alpha_{\tau_k})-\mathcal{D}\sigma^{(l)}(X^n_{\kappa(n,u)},\alpha_{\kappa(n,u)}) \Big)\sigma^{(l_1)}(X^n_{\kappa(n,u)},\alpha_{\kappa(n,u)})dW_u^{l_1}\nonumber
\\
&+\sum^m_{l_1=1}\sum^{N_{(\kappa(n,t),t)}}_{k=0}\int^{\tau_{k+1}}_{\tau_k}\mathcal{D}\sigma^{(l)}(X^n_{\kappa(n,u)},\alpha_{\kappa(n,u)})\sigma^{(l_1)}(X^n_{\kappa(n,u)},\alpha_{\kappa(n,u)})dW_u^{l_1}\nonumber
\\
&+\sum^m_{l_1=1}\sum^{N_{(\kappa(n,t),t)}}_{k=0}\int^{\tau_{k+1}}_{\tau_k}\mathcal{D}\sigma^{(l)}(X^n_{\tau_k},\alpha_{\tau_k})\Big(\sigma^{(l_1)}_1(X^n_{\kappa(n,u)},\alpha_{\kappa(n,u)})+\sigma^{(l_1)}_2(X^n_{\kappa(n,u)},\alpha_{\kappa(n,u)})\Big)dW_u^{l_1}
\end{align*}
almost surely for any $l=1,\ldots,m$ and $n\in \mathbb{N}$. Notice that second and ninth terms on the right side of preceding equation are $\sigma_2^{(l)}(t,X^n_{\kappa(n,t)},\alpha_{\kappa(n,t)})$ and $\sigma_1^{(l)}(t,X^n_{\kappa(n,t)},\alpha_{\kappa(n,t)})$ respectively. Therefore, preceding equation can be written as, 
\begin{align}
&E\big|\sigma^{(l)}(X^n_t,\alpha_t)-\tilde{\sigma}^{(l)}(t,X^n_{\kappa(n,t)},\alpha_{\kappa(n,t)})\big|^2\nonumber
\\
&\leq KE\Big|\mathbb{I}\{ N_{(\kappa(n,t),t)}=1 \}\sum_{i_0\neq j_0}\int^t_{\kappa(n,t)}\Big(\big(\sigma^{(l)}(X^n_u,j_0)-\sigma^{(l)}(X^n_u,i_0)\big)\notag
\\
&\quad\qquad-\big(\sigma^{(l)}(X^n_{\kappa(n,u)},j_0)-\sigma^{(l)}(X^n_{\kappa(n,u)},i_0)\big)\Big)d[M_{i_0j_0}](u)\Big|^2\notag
\\ 
&+ KE\Big|\mathbb{I}\{ N_{(\kappa(n,t),t)}\geq2 \}\sum_{i_0\neq j_0}\int^t_{\kappa(n,t)}\Big(\sigma^{(l)}(X^n_u,j_0)-\sigma^{(l)}(X^n_u,i_0)\Big)d[M_{i_0j_0}](u) \Big|^2\nonumber
\\
&+KE\Big|\sum_{i_0\neq j_0}\int^t_{\kappa(n,t)}\Big(\sigma^{(l)}(X^n_u,i_0)-\sigma^{(l)}(X^n_u,j_0)\Big)d\langle M_{i_0j_0}\rangle(u)\nonumber \Big|^2
\\
&+KE\Big|\sum_{j_0\in \mathcal{S}}\int^t_{\kappa(n,t)}q_{\alpha_{u-}j_0}\Big(\sigma^{(l)}(X^n_u,j_0)-\sigma^{(l)}(X^n_u,\alpha_{u-})\Big)du \Big|^2\nonumber
\\
&+KE\Big|\sum^{N_{(\kappa(n,t),t)}}_{k=0}\Big( \sigma^{(l)}(X^n_{\tau_{k+1}},\alpha_{\tau_k})-\sigma^{(l)}(X^n_{\tau_k},\alpha_{\tau_k})-\mathcal{D}\sigma^{(l)}(X^n_{\tau_k},\alpha_{\tau_k})(X^n_{\tau_{k+1}}-X^n_{\tau_k}) \Big) \Big|^2\nonumber
\\
&+KE\Big|\sum^{N_{(\kappa(n,t),t)}}_{k=0}\int^{\tau_{k+1}}_{\tau_k}\mathcal{D}\sigma^{(l)}(X^n_{\tau_k},\alpha_{\tau_k})b^n(X^n_{\kappa(n,u)},\alpha_{\kappa(n,u)})du \Big|^2\nonumber
\\
&+KE\Big|\sum^m_{l_1=1}\sum^{N_{(\kappa(n,t),t)}}_{k=0}\int^{\tau_{k+1}}_{\tau_k}\Big( \mathcal{D}\sigma^{(l)}(X^n_{\tau_k},\alpha_{\tau_k})-\mathcal{D}\sigma^{(l)}(X^n_{\kappa(n,u)},\alpha_{\kappa(n,u)}) \Big)\nonumber
\\
&\quad\qquad\times\sigma^{(l_1)}(X^n_{\kappa(n,u)},\alpha_{\kappa(n,u)})dW_u^{l_1} \Big|^2\nonumber
\\
&+KE\Big|\sum^m_{l_1=1}\sum^{N_{(\kappa(n,t),t)}}_{k=0}\int^{\tau_{k+1}}_{\tau_k}\mathcal{D}\sigma^{(l)}(X^n_{\tau_k},\alpha_{\tau_k})\Big(\sigma^{(l_1)}_1(X^n_{\kappa(n,u)},\alpha_{\kappa(n,u)})+\sigma^{(l_1)}_2(X^n_{\kappa(n,u)},\alpha_{\kappa(n,u)})\Big)dW_u^{l_1} \Big|^2\nonumber
\\
&=:T_1+T_2+T_3+T_4+T_5+T_6+T_7+T_8\label{eq:T_Splitting}
\end{align}
for any $l=1,\ldots,m$, $t\in[0,T]$ and $n\in \mathbb{N}$. Recall the    definition of $[M_{i_0j_0}]$ and use Assumption $H$-2 along with Lemmas [\ref{lem:EN_i, EN_i^2 },  \ref{lem:scheme_differ_rate}], to obtain the following,
\begin{align}
T_1:=&KE\Big|\mathbb{I}\{ N_{(\kappa(n,t),t)}=1 \}\sum_{i_0\neq j_0}\int^t_{\kappa(n,t)}\Big(\big(\sigma^{(l)}(X^n_u,j_0)-\sigma^{(l)}(X^n_u,i_0)\big)\notag
\\
&\quad-\big(\sigma^{(l)}(X^n_{\kappa(n,u)},j_0)-\sigma^{(l)}(X^n_{\kappa(n,u)},i_0)\big)\Big)d[M_{i_0j_0}](u)\Big|^2\notag
\\
\leq &KE\Big(\mathbb{I}\{ N_{(\kappa(n,t),t)}=1 \}E\Big\{\Big(\sum_{i_0\neq j_0}\int^t_{\kappa(n,t)}\big|\big(\sigma^{(l)}(X^n_u,j_0)-\sigma^{(l)}(X^n_u,i_0)\big)\notag
\\
&\quad-\big(\sigma^{(l)}(X^n_{\kappa(n,u)},j_0)-\sigma^{(l)}(X^n_{\kappa(n,u)},i_0)\big)\big|d[M_{i_0j_0}](u)\Big)^2\Big|\mathscr{F}_T^{\alpha} \Big\}   \Big)\notag
\\
\leq &KE\Big(\mathbb{I}\{ N_{(\kappa(n,t),t)}=1 \}E\Big\{\Big(\sum_{i_0\neq j_0}\int^t_{\kappa(n,t)}|X^n_u-X^n_{\kappa(n,u)}|d[M_{i_0j_0}](u)\Big)^2\Big|\mathscr{F}_T^{\alpha} \Big\}   \Big)\notag
\\
\leq &KE\Big(\mathbb{I}\{ N_{(\kappa(n,t),t)}=1 \}N^2_{(\kappa(n,t),t)}E\Big\{\sup_{0\leq u\leq t}|X^n_u-X^n_{\kappa(n,u)}|^2\Big|\mathscr{F}_T^{\alpha} \Big\}   \Big)\notag
\\
\leq&Kn^{-1}E\big(\mathbb{I}\{ N_{(\kappa(n,t),t)}=1 \}\big)\leq Kn^{-1}P(N_{(\kappa(n,t),t)}\geq 1)\leq Kn^{-2}\label{eq:T_1Rate}
\end{align}
for any $l=1,\ldots,m$, $t\in[0,T]$ and $n\in \mathbb{N}$. Again recall the definition of $[M_{i_0j_0}]$ and use Remark \ref{rem:growth} along with Lemmas [\ref{lem:SchemeMoment}, \ref{lem:EN_i, EN_i^2 }], $T_2$ can be estimated as, 
\begin{align}
T_2:=&KE\Big| \mathbb{I}\{ N_{(\kappa(n,t),t)}\geq2 \}\sum_{i_0\neq j_0}\int^t_{\kappa(n,t)}\Big(\sigma^{(l)}(X^n_u,j_0)-\sigma^{(l)}(X^n_u,i_0)\Big)d[M_{i_0j_0}](u) \Big|^2\notag
\\
\leq& KE\Big(\mathbb{I}\{ N_{(\kappa(n,t),t)}\geq2 \}E\Big\{  \Big(\sum_{i_0\neq j_0}\int^t_{\kappa(n,t)}\big|\sigma^{(l)}(X^n_u,j_0)-\sigma^{(l)}(X^n_u,i_0)\big|d[M_{i_0j_0}](u)\Big)^2\Big|\mathscr{F}_T^{\alpha} \Big\}    \Big)\notag
\\
\leq&KE\Big(\mathbb{I}\{ N_{(\kappa(n,t),t)}\geq2 \}\Big( \sum_{i_0\neq j_0}([M_{i_0j_0}](t)-[M_{i_0j_0}](\kappa(n,t))) \Big)^2E\Big\{ (1+\sup_{0\leq u\leq T}|X^n_u|^2)\Big|\mathscr{F}_T^{\alpha} \Big\}   \Big)\notag
\\
\leq& KE\Big(\mathbb{I}\{ N_{(\kappa(n,t),t)}\geq2 \}N_{(\kappa(n,t),t)}^2\Big)=K\sum_{N=2}^{\infty}N^2 P(N_{(\kappa(n,t),t)}=N)\nonumber
\\
\leq& K\sum_{N=2}^{\infty}N^2 P(N_{(\kappa(n,t),t)}\geq N)\leq K\sum_{N=2}^{\infty}N^2(qh)^N=Kq^2h^2\sum_{N=0}^{\infty}(N+2)^2(qh)^N\leq  Kn^{-2}\label{eq:T_2Rate}
\end{align}
for any $l=1,\ldots,m$, $t\in[0,T]$ and $n\in \mathbb{N}$. Moreover, H\"older's inequality, definition of $\langle M_{i_0j_0}\rangle$, Remark \ref{rem:growth} and Lemma \ref{lem:SchemeMoment}, yields
\begin{align}
T_3+T_4:=&KE\Big|\sum_{i_0\neq j_0}\int^t_{\kappa(n,t)}\Big(\sigma^{(l)}(X^n_u,i_0)-\sigma^{(l)}(X^n_u,j_0)\Big)d\langle M_{i_0j_0}\rangle (u)  \Big|^2\nonumber
\\
&\quad+KE\Big| \sum_{j_0\in \mathcal{S}}\int^t_{\kappa(n,t)}q_{\alpha_{u-}j_0}\Big(\sigma^{(l)}(X^n_u,j_0)-\sigma^{(l)}(X^n_u,\alpha_{u-})\Big)du \Big|^2\nonumber
\\
\leq& KE\sum_{i_0\neq j_0}\Big| \int^t_{\kappa(n,t)}\Big(\sigma^{(l)}(X^n_u,i_0)-\sigma^{(l)}(X^n_u,j_0)\Big)q_{i_0j_0}\mathbb{I}\{\alpha_{u-}=i_0 \} du\Big|^2\nonumber
\\
&\quad+KE\sum_{j_0\in \mathcal{S}}\Big|\int^t_{\kappa(n,t)}\Big(\sigma^{(l)}(X^n_u,j_0)-\sigma^{(l)}(X^n_u,\alpha_{u-})\Big)du\Big|^2\nonumber
\\
\leq& Kn^{-1}E\Big(\sum_{i_0\neq j_0} \int^t_{\kappa(n,t)}E\Big\{\big|(\sigma^{(l)}(X^n_u,i_0)-\sigma^{(l)}(X^n_u,j_0))\big|^2\Big|\mathscr{F}_T^{\alpha} \Big\} du  \Big)\nonumber
\\
&\quad+Kn^{-1}E\Big(\sum_{j_0\in \mathcal{S}}\int^t_{\kappa(n,t)}E\Big\{  \big|\sigma^{(l)}(X^n_u,j_0)-\sigma^{(l)}(X^n_u,\alpha_{u-})\big|^2 \Big|\mathscr{F}_T^{\alpha} \Big\} du  \Big)\nonumber
\\
\leq& Kn^{-1}E\Big( \int^t_{\kappa(n,t)}E\Big\{ (1+\sup_{0\leq u \leq T}|X^n_u|^2)\Big|\mathscr{F}_T^{\alpha} \Big\}  du\Big)\leq Kn^{-2}\label{eq:T_3,T_4Rate}
\end{align}
for any $l=1,\ldots,m$, $t\in[0,T]$ and $n\in \mathbb{N}$. Now, $T_5$ and $T_6$ can be written as
\begin{align}
T_5&+T_6:=KE\Big| \sum^{N_{(\kappa(n,t),t)}}_{k=0}\Big( \sigma^{(l)}(X^n_{\tau_{k+1}},\alpha_{\tau_k})-\sigma^{(l)}(X^n_{\tau_k},\alpha_{\tau_k})-\mathcal{D}\sigma^{(l)}(X^n_{\tau_k},\alpha_{\tau_k})(X^n_{\tau_{k+1}}-X^n_{\tau_k}) \Big)  \Big|^2\nonumber
\\
&\quad+KE\Big| \sum^{N_{(\kappa(n,t),t)}}_{k=0}\int^{\tau_{k+1}}_{\tau_k}\mathcal{D}\sigma^{(l)}(X^n_{\tau_k},\alpha_{\tau_k})b^n(X^n_{\kappa(n,u)},\alpha_{\kappa(n,u)})du \Big|^2\nonumber
\\
\leq& KE\Big( (1+N_{(\kappa(n,t),t)}) \sum^{N_{(\kappa(n,t),t)}}_{k=0} E\Big\{\big|\sigma^{(l)}(X^n_{\tau_{k+1}},\alpha_{\tau_k})-\sigma^{(l)}(X^n_{\tau_k},\alpha_{\tau_k})\nonumber
\\
&\quad\quad\qquad-\mathcal{D}\sigma^{(l)}(X^n_{\tau_k},\alpha_{\tau_k})(X^n_{\tau_{k+1}}-X^n_{\tau_k})\big|^2  \Big|\mathscr{F}_T^{\alpha}  \Big\}  \Big)\nonumber
\\
&\quad+KE\Big( (1+N_{(\kappa(n,t),t)}) \sum^{N_{(\kappa(n,t),t)}}_{k=0} E\Big\{  \Big| \int^{\tau_{k+1}}_{\tau_k}\mathcal{D}\sigma^{(l)}(X^n_{\tau_k},\alpha_{\tau_k})b^n(X^n_{\kappa(n,u)},\alpha_{\kappa(n,u)})du \Big|^2\Big|\mathscr{F}_T^{\alpha}  \Big\} \Big)\nonumber
\end{align}
which on the application of H\"older's inequality, Remark \ref{rem:growth},  Assumptions $H$-3, $H$-5 and Lemmas [\ref{lem:SchemeMoment}, \ref{lem: mvt}, \ref{lem:EN_i, EN_i^2 }, \ref{lem:scheme_differ_rate}] gives,
\begin{align}
T_5&+T_6\leq KE\Big((1+ N_{(\kappa(n,t),t)}) \sum^{N_{(\kappa(n,t),t)}}_{k=0} E\Big\{\big|X^n_{\tau_{k+1}}-X^n_{\tau_k}\big|^4  \Big|\mathscr{F}_T^{\alpha}  \Big\}  \Big)\nonumber
\\
&+Kn^{-1}E\Big( (1+N_{(\kappa(n,t),t)}) \sum^{N_{(\kappa(n,t),t)}}_{k=0}  \int^{\tau_{k+1}}_{\tau_k}E\Big\{  \big|\mathcal{D}\sigma^{(l)}(X^n_{\tau_k},\alpha_{\tau_k})b^n(X^n_{\kappa(n,u)},\alpha_{\kappa(n,u)})\big|^2\Big|\mathscr{F}_T^{\alpha}  \Big\}du  \Big)\nonumber
\\
\leq& KE\Big((1+N_{(\kappa(n,t),t)})\sum^{N_{(\kappa(n,t),t)}}_{k=0}\big((\tau_{k+1}-\tau_k)^2+(\tau_{k+1}-\tau_k)^4\big)   \Big)\nonumber
\\
&+Kn^{-1}E\Big( (1+N_{(\kappa(n,t),t)}) \sum^{N_{(\kappa(n,t),t)}}_{k=0} \int^{\tau_{k+1}}_{\tau_k}E\Big\{ \sup_{0\leq u\leq T}(1+|X^n_{\kappa(n,u)}|)^{2\rho+2}\Big|\mathscr{F}_T^{\alpha}  \Big\}du  \Big)\nonumber
\\
\leq & K n^{-2} E(1+N_{(\kappa(n,t),t)})^2+Kn^{-1}E\Big( (1+N_{(\kappa(n,t),t)}) \sum^{N_{(\kappa(n,t),t)}}_{k=0} (\tau_{k+1}-\tau_k)\Big) \leq Kn^{-2}\label{eq:T_5T_6Rate}
\end{align}
for any $l=1,\ldots,m$, $t\in[0,T]$ and $n\in \mathbb{N}$. One uses Remark \ref{rem:growth} and Assumption $H$-3, $T_7$ can be estimated as, 
\begin{align*}
T_7:=&KE\Big|\sum^m_{l_1=1}\sum^{N_{(\kappa(n,t),t)}}_{k=0}\int^{\tau_{k+1}}_{\tau_k}\Big( \mathcal{D}\sigma^{(l)}(X^n_{\tau_k},\alpha_{\tau_k})-\mathcal{D}\sigma^{(l)}(X^n_{\kappa(n,u)},\alpha_{\kappa(n,u)}) \Big)
\\
&\quad\quad\qquad\times\sigma^{(l_1)}(X^n_{\kappa(n,u)},\alpha_{\kappa(n,u)})dW_u^{l_1} \Big|^2
\\
\leq&K\sum^m_{l_1=1}E\Big(\sum^{N_{(\kappa(n,t),t)}}_{k=0}\int^{\tau_{k+1}}_{\tau_k}E\Big\{\big| \mathcal{D}\sigma^{(l)}(X^n_{\tau_k},\alpha_{\tau_k})-\mathcal{D}\sigma^{(l)}(X^n_{\kappa(n,u)},\alpha_{\kappa(n,u)}) \big|^2
\\
&\quad\quad\qquad\times\big|\sigma^{(l_1)}(X^n_{\kappa(n,u)},\alpha_{\kappa(n,u)})\big|^2\Big|\mathscr{F}_T^{\alpha}\Big\}du \Big)
\\
\leq & K\sum^m_{l_1=1}E\Big(\sum^{N_{(\kappa(n,t),t)}}_{k=0}\int^{\tau_{k+1}}_{\tau_k}E\Big\{ |\mathcal{D}\sigma^{(l)}(X^n_{\tau_k},\alpha_{\tau_k})-\mathcal{D}\sigma^{(l)}(X^n_{\tau_k},\alpha_{\kappa(n,u)})|^2
\\
&\quad\quad\qquad\times|\sigma^{(l_1)}(X^n_{\kappa(n,u)},\alpha_{\kappa(n,u)})|^2\Big|\mathscr{F}_T^{\alpha} \Big\}du\Big)
\\
&\quad+K\sum^m_{l_1=1}E\Big(\sum^{N_{(\kappa(n,t),t)}}_{k=0}\int^{\tau_{k+1}}_{\tau_k}E\Big\{ |\mathcal{D}\sigma^{(l)}(X^n_{\tau_k},\alpha_{\kappa(n,u)})-\mathcal{D}\sigma^{(l)}(X^n_{\kappa(n,u)},\alpha_{\kappa(n,u)})|^2
\\
&\quad\quad\qquad\times|\sigma^{(l_1)}(X^n_{\kappa(n,u)},\alpha_{\kappa(n,u)})|^2\Big|\mathscr{F}_T^{\alpha} \Big\}du\Big)
\\
\leq&KE\Big(\sum^{N_{(\kappa(n,t),t)}}_{k=0}\int^{\tau_{k+1}}_{\tau_k}\mathbb{I}\{\alpha_{\tau_k}\neq\alpha_{\kappa(n,u)}\}E\Big\{(1+\sup_{0\leq r\leq T}|X^n_r|^2)\Big|\mathscr{F}_T^{\alpha} \Big\}du\Big)
\\
&+KE\Big(\sum^{N_{(\kappa(n,t),t)}}_{k=0}\int^{\tau_{k+1}}_{\tau_k}E\Big\{|X^n_{\tau_k}-X^n_{\kappa(n,u)}|^2(1+|X^n_{\kappa(n,u)}|)^2\Big|\mathscr{F}_T^{\alpha} \Big\}du\Big)
\end{align*}
for any $l=1,\ldots,m$, $t\in[0,T]$ and $n\in \mathbb{N}$. Notice that  there is no jump in $(\tau_k, \tau_{k+1})$, hence $\alpha_{\tau_k}=\alpha_u$ for any $u\in(\tau_k,\tau_{k+1})$. Then, due to H\"older's inequality, and Lemmas [\ref{lem:SchemeMoment}, \ref{lem:scheme_differ_rate}] gives
\begin{align}
T_7\leq &KE\Big(\int^t_{\kappa(n,t)}\mathbb{I}\{\alpha_u\neq\alpha_{\kappa(n,u)}\}du\Big)\nonumber
\\
&\quad+KE\Big(\sum^{N_{(\kappa(n,t),t)}}_{k=0}\int^{\tau_{k+1}}_{\tau_k}\Big[E\Big\{|X^n_{\tau_k}-X^n_{\kappa(n,u)}|^4\Big|\mathscr{F}_T^{\alpha} \Big\}E\Big\{(1+\sup_{0\leq u \leq T}|X^n_u|^4)\Big|\mathscr{F}_T^{\alpha} \Big\}\Big]^{\frac{1}{2}}du\Big)\nonumber
\\
\leq &K\int^t_{\kappa(n,t)} P(\alpha_u\neq \alpha_{\kappa(n,u)})du+KE\Big(\sum^{N_{(\kappa(n,t),t)}}_{k=0}\int^{\tau_{k+1}}_{\tau_k}\big((\tau_k-\kappa(n,u))^2+(\tau_k-\kappa(n,u))^4\big)^{\frac{1}{2}}du\Big)\nonumber
\\
\leq& K\int^t_{\kappa(n,t)}\Big(q_{\alpha_u\alpha_{\kappa(n,u)}}(u-\kappa(n,u))+o(u-\kappa(n,u))\Big)du+ Kn^{-1}E\Big(\sum^{N_{(\kappa(n,t),t)}}_{k=0}(\tau_{k+1}-\tau_k)\Big)\notag
\\
\leq& Kn^{-2}\label{eq:T_7Rate}
\end{align}
for any $l=1,\ldots,m$, $t\in[0,T]$ and $n\in \mathbb{N}$. Moreover,  Remark \ref{rem:growth} and Lemmas [\ref{lem:sigmaTermRate}, \ref{lem:EN_i, EN_i^2 }] implies
\begin{align}
T_8:=&KE\Big|\sum^m_{l_1=1}\sum^{N_{(\kappa(n,t),t)}}_{k=0}\int^{\tau_{k+1}}_{\tau_k}\mathcal{D}\sigma^{(l)}(X^n_{\tau_k},\alpha_{\tau_k})\Big(\sigma^{(l_1)}_1(X^n_{\kappa(n,u)},\alpha_{\kappa(n,u)})\nonumber
\\
&\quad\quad+\sigma^{(l_1)}_2(X^n_{\kappa(n,u)},\alpha_{\kappa(n,u)})\Big)dW_u^{l_1}\Big|^2\nonumber
\\
\leq&K\sum^m_{l_1=1}E\Big(\sum^{N_{(\kappa(n,t),t)}}_{k=0}E\Big\{\Big|\int^{\tau_{k+1}}_{\tau_k}\mathcal{D}\sigma^{(l)}(X^n_{\tau_k},\alpha_{\tau_k})\Big(\sigma^{(l_1)}_1(X^n_{\kappa(n,u)},\alpha_{\kappa(n,u)})\nonumber
\\
&\quad\quad+\sigma^{(l_1)}_2(X^n_{\kappa(n,u)},\alpha_{\kappa(n,u)})\Big)dW_u^{l_1}\Big|^2\Big|\mathscr{F}_T^{\alpha}  \Big\}  \Big)\nonumber
\\
\leq& K\sum^m_{l_1=1}E\Big(\sum^{N_{(\kappa(n,t),t)}}_{k=0}\int^{\tau_{k+1}}_{\tau_k}E\Big\{|\sigma^{(l_1)}_1(X^n_{\kappa(n,u)},\alpha_{\kappa(n,u)})|^2+|\sigma^{(l_1)}_2(X^n_{\kappa(n,u)},\alpha_{\kappa(n,u)})|^2\Big|\mathscr{F}_T^{\alpha}  \Big\}du  \Big)\nonumber
\\
\leq& Kn^{-1}E\sum^{N_{(\kappa(n,t),t)}}_{k=0}(\tau_{k+1}-\tau_{k})  +KE\Big(\sum^{N_{(\kappa(n,t),t)}}_{k=0}\int^{\tau_{k+1}}_{\tau_k}\mathbb{I}\{N_{(\kappa(n,u),u)}=1\}du  \Big)\nonumber
\\
\leq & Kn^{-2}+KE\Big(\int^t_{\kappa(n,t)}\mathbb{I}\{N_{(\kappa(n,u),u)}=1\}du\Big)=Kn^{-2}+K\int^t_{\kappa(n,t)}P\big(N_{(\kappa(n,u),u)}=1)du\nonumber
\\
\leq& Kn^{-2}\label{eq:T_8Rate}
\end{align}
for any $l=1,\ldots,m$, $t\in[0,T]$ and $n\in \mathbb{N}$. By combining the estimates from \eqref{eq:T_1Rate} to \eqref{eq:T_8Rate} in \eqref{eq:T_Splitting}, one completes the proof.
\end{proof}
Let us define $e^n_t:=X_t-X^n_t$ for any $t\in[0,T]$ and $n\in\mathbb{N}$.
\begin{lemma}\label{lem:bCrusial}
Let Assumptions H-1 to H-6 be satisfied. Then,
\begin{align*}
E\int^t_0 e^n_s \Big(b(X^n_s,\alpha_s)-b(X^n_{\kappa(n,s)},\alpha_{\kappa(n,s)})\Big)ds\leq K\int^t_0 E\Big(\sup_{0\leq r \leq s}|e^n_r|^2\Big)ds+Kn^{-2}
\end{align*}
for any $n\in \mathbb{N}$ and $t\in[0,T]$, where $0<h=(1/n)<1/(2q)$ with $q=\max\{-q_{i_0i_0};i_0\in\mathcal{S}\}$. 
\end{lemma}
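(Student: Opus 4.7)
The plan is to apply Lemma~\ref{lem:for applying mvt} with $g = b$ and lower endpoint $\kappa(n,s)$ to split
\begin{align*}
\Delta_s := b(X^n_s, \alpha_s) - b(X^n_{\kappa(n,s)}, \alpha_{\kappa(n,s)}) = M(s) + C(s) + I(s),
\end{align*}
where $M(s) = \sum_{i_0\neq j_0}\int_{\kappa(n,s)}^s (b(X^n_u, j_0) - b(X^n_u, i_0)) dM_{i_0j_0}(u)$ is the jump-martingale piece, $C(s)$ is the associated compensator drift, and $I(s) = \sum_{k=0}^{N_{(\kappa(n,s),s)}}(b(X^n_{\tau_{k+1}}, \alpha_{\tau_k}) - b(X^n_{\tau_k}, \alpha_{\tau_k}))$ collects the smooth evolution between consecutive Markov jumps. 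Multiplying by $e^n_s$, taking expectation, and integrating over $s$, each piece will be estimated separately; the target bound will emerge from Cauchy--Schwarz with weighted Young's inequalities of the form $ab \leq \varepsilon a^2 + b^2/(4\varepsilon)$.

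For the compensator $C(s)$, Remark~\ref{rem:growth} gives $|C(s)| \leq Kn^{-1}(1+|X^n|)^{\rho+1}$, and Cauchy--Schwarz with Lemma~\ref{lem:SchemeMoment} absorbs $E|e^n_s C(s)|$ into $\tfrac{1}{2}E|e^n_s|^2 + Kn^{-2}$. For $I(s)$, since the chain is constant at $\alpha_{\tau_k}$ on each $(\tau_k, \tau_{k+1})$, Lemma~\ref{lem: mvt} (with $\gamma = \rho - 1$) expands $I(s) = \sum_k \mathcal{D}b(X^n_{\tau_k}, \alpha_{\tau_k})(X^n_{\tau_{k+1}} - X^n_{\tau_k}) + R(s)$ with $|R(s)| \leq K(1+\sup|X^n|)^{\rho-1}\sum_k |X^n_{\tau_{k+1}}-X^n_{\tau_k}|^2$. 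Conditioning on $\mathscr{F}_T^\alpha$, Lemma~\ref{lem:scheme_differ_rate} with $p_0 = 4$ gives $E(|X^n_{\tau_{k+1}} - X^n_{\tau_k}|^4 | \mathscr{F}_T^\alpha) \leq K(\tau_{k+1}-\tau_k)^2$, and $\sum_k (\tau_{k+1}-\tau_k)^2 \leq n^{-2}$ combined with $E(N+1)^2 = O(1)$ from Lemma~\ref{lem:EN_i, EN_i^2 } yields $E|R(s)|^2 \leq Kn^{-2}$. In the main Taylor term, substituting the scheme dynamics produces a drift piece $\sum_k \mathcal{D}b \int b^n dr$ (handled using $|b^n| \leq K(1+|x|)^{\rho_1+1}$ from Assumption~$H$-5 to avoid the $n^{1/2}$ blow-up) and a stochastic piece $W_I(s) := \sum_k \mathcal{D}b(X^n_{\tau_k}, \alpha_{\tau_k})\int_{\tau_k}^{\tau_{k+1}} \tilde\sigma dW_r$, which is the delicate term alongside $M(s)$.

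For both $M(s)$ and $W_I(s)$, I split $e^n_s = e^n_{\kappa(n,s)} + (e^n_s - e^n_{\kappa(n,s)})$ and further decompose $e^n_s - e^n_{\kappa(n,s)} = A_1(s) + A_2(s)$ using SDE~\eqref{eq:sdems} minus scheme~\eqref{eq:scheme}, where $A_1$ is the drift integral and $A_2 = \int_{\kappa(n,s)}^s(\sigma(X_r, \alpha_r) - \tilde\sigma(r, \ldots)) dW_r$. The products $e^n_{\kappa(n,s)} M(s)$ and $e^n_{\kappa(n,s)} W_I(s)$ have zero expectation since both $M$ and $W_I$ are martingale increments starting at $\kappa(n,s)$ and $e^n_{\kappa(n,s)}$ is $\mathscr{F}_{\kappa(n,s)}$-measurable. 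The Wiener--jump cross term $A_2 \cdot M(s)$ vanishes in expectation by orthogonality $\langle W, M_{i_0j_0}\rangle \equiv 0$ (since $W$ and $\alpha$ are independent). The Wiener--Wiener cross term $A_2 \cdot W_I(s)$ reduces via It\^o's isometry to $E\int_{\kappa(n,s)}^s(\sigma - \tilde\sigma) \mathcal{D}b\tilde\sigma \, dr$, bounded by $Kn^{-2}$ using Lemma~\ref{lem:sigmaCrusial} and Corollary~\ref{cor:tildaSigma}. For $A_1 \cdot M(s)$, I condition on $\mathscr{F}_T^\alpha$: the estimates $E(|A_1|^2 | \mathscr{F}_T^\alpha) \leq Kn^{-2}$ and $E(|M(s)|^2 | \mathscr{F}_T^\alpha) \leq K(N_{(\kappa(n,s),s)}^2 + n^{-2})$ combine to give $E|A_1 \cdot M(s)| \leq KEN_{(\kappa(n,s),s)}/n + Kn^{-2} \leq Kn^{-2}$ by Lemma~\ref{lem:EN_i, EN_i^2 }(b). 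Finally, $A_1 \cdot W_I(s)$ is handled by It\^o's product rule (the stochastic-integral part has zero expectation) and the split $\sqrt{E|b - b^n_\kappa|^2} \leq Kn^{-1/2} + K\sqrt{E\sup|e^n|^2}$ (from Assumption~$H$-6 and $L^2$-smallness of increments), followed by a weighted Young $Kn^{-3/2}\sqrt{a} \leq Kn^{-3} + Ka$ to produce $Kn^{-2} + K\int_0^t E\sup_{r\leq s}|e^n_r|^2 ds$.

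The main obstacles will be: (i) exploiting the orthogonality $\langle W, M_{i_0j_0}\rangle \equiv 0$ cleanly to kill the Wiener--jump cross term; (ii) conditioning on $\mathscr{F}_T^\alpha$ precisely so that $EN_{(\kappa(n,s),s)} \leq Kn^{-1}$ (Lemma~\ref{lem:EN_i, EN_i^2 }(b))---and not $EN^2$---governs the leading order in the $A_1 \cdot M$ estimate, delivering the $n^{-2}$ rate; and (iii) bookkeeping the polynomial weights $(1+|X|+|X^n|)^{\rho+\rho_1+1}$ produced by Remark~\ref{rem:growth}, Assumption~$H$-5, and Lemma~\ref{lem: mvt}, whose moments remain bounded thanks to Lemmas~\ref{lem:true moment and 1/2 rate} and~\ref{lem:SchemeMoment}.
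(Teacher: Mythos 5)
Your overall architecture coincides with the paper's proof of this lemma: apply Lemma \ref{lem:for applying mvt} with $g=b$, expand the between-jump increments with Lemma \ref{lem: mvt}, split $e^n_s=e^n_{\kappa(n,s)}+(e^n_s-e^n_{\kappa(n,s)})$ so that the products of $e^n_{\kappa(n,s)}$ with the martingale pieces have zero expectation (the paper's term $C_3=0$), decompose $e^n_s-e^n_{\kappa(n,s)}$ into its drift and stochastic parts, and control everything with the conditional moment bounds, Lemma \ref{lem:scheme_differ_rate} and the jump-count estimates of Lemma \ref{lem:EN_i, EN_i^2 }. Your tactical variations are legitimate and in places cleaner: you annihilate the Wiener--jump cross term exactly via $[W^l,M_{i_0j_0}]\equiv 0$ and reduce the Wiener--Wiener cross term by the It\^o isometry, whereas the paper treats both at once by Cauchy--Schwarz against the single unified estimate $E|H_s+R_s|^{p_1}\le Kn^{-1}$ of \eqref{eq:H_R_Rate}; your $A_1\cdot M(s)$ estimate via conditioning on $\mathscr{F}_T^{\alpha}$ and $EN_{(\kappa(n,s),s)}\le Kh$ is exactly the mechanism hidden inside that display.

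One step would fail as literally written: the inequality $\{E|b(X_r,\alpha_r)-b(X^n_r,\alpha_r)|^2\}^{1/2}\le Kn^{-1/2}+K\{E\sup|e^n|^2\}^{1/2}$ that you invoke when the drift difference meets $W_I(s)$ (and, implicitly, $M(s)$). Since $b$ is only polynomially Lipschitz, Remark \ref{rem:growth} gives $E|b(X_r,\alpha_r)-b(X^n_r,\alpha_r)|^2\le E\big[(1+|X_r|+|X^n_r|)^{2\rho}|e^n_r|^2\big]$, and for $\rho>0$ the random weight cannot be pulled out of the expectation: H\"older would force a fourth moment of $e^n_r$, which is not part of the Gronwall quantity. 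The paper circumvents this by applying the weighted Young inequality \emph{pathwise, before taking expectations}, producing $n|e^n_r|^2+n^{-1}(1+|X_r|+|X^n_r|)^{2\rho}|H_s+R_s|^2$; only the second product is then hit with Cauchy--Schwarz (using $E|H_s+R_s|^4\le Kn^{-1}$), and the factor $n$ on the first is absorbed by the interval length $s-\kappa(n,s)\le n^{-1}$ to yield $K\int_0^tE\sup_{r\le s}|e^n_r|^2\,ds$. You mention weighted Young inequalities in your preamble, so the repair is available, but the particular $L^2$-split you wrote down must be replaced by this pathwise separation. A smaller point of the same flavour: to control the Taylor remainder $|R(s)|^2$, whose summands carry the weight $(1+|X^n_{\tau_k}|+|X^n_{\tau_{k+1}}|)^{2\rho-2}$, conditional Cauchy--Schwarz requires Lemma \ref{lem:scheme_differ_rate} with $p_0=8$, not $p_0=4$ (the paper uses $p_0=2,4,8$).
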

\begin{proof}
By using Lemma \ref{lem:for applying mvt}, one obtains, 
\begin{align*}
b(X^n_s,\alpha_s)&-b(X^n_{\kappa(n,s)},\alpha_{\kappa(n,s)})=\sum_{i_0\neq j_0}\int^s_{\kappa(n,s)}\Big(b(X^n_r,j_0)-b(X^n_r,i_0)\Big)dM_{i_0j_0}(r)
\\
&+\sum_{j_0\in \mathcal{S}}\int^s_{\kappa(n,s)}q_{\alpha_{r-}j_0}\Big(b(X^n_r,j_0)-b(X^n_r,\alpha_{r-})\Big)dr
\\
&+\sum_{k=0}^{N_{(\kappa(n,s),s)}}\Big(b(X^n_{\tau_{k+1}},\alpha_{\tau_k})-b(X^n_{\tau_k},\alpha_{\tau_k})-\mathcal{D}b(X^n_{\tau_k},\alpha_{\tau_k})(X^n_{\tau_{k+1}}-X^n_{\tau_k})\Big)
\\
&+\sum_{k=0}^{N_{(\kappa(n,s),s)}}\int^{\tau_{k+1}}_{\tau_k}\mathcal{D}b(X^n_{\tau_k},\alpha_{\tau_k})b^n(X^n_{\kappa(n,r)}, \alpha_{\kappa(n,r)})dr
\\
&+\sum^m_{l=1}\sum_{k=0}^{N_{(\kappa(n,s),s)}}\int^{\tau_{k+1}}_{\tau_k}\mathcal{D}b(X^n_{\tau_k},\alpha_{\tau_k})\tilde{\sigma}^{(l)}(r,X^n_{\kappa(n,r)},\alpha_{\kappa(n,r)})dW_r^l
\end{align*}
almost surely for any $s\in[0,T]$ and $n\in \mathbb{N}$. In order to simplify our notations, let us define, 
\begin{align}
H_s:=&\sum_{i_0\neq j_0}\int^s_{\kappa(n,s)}\Big(b(X^n_r,j_0)-b(X^n_r,i_0)\Big)dM_{i_0j_0}(r) \notag
\\
R_s:=&\sum^m_{l=1}\sum_{k=0}^{N_{(\kappa(n,s),s)}}\int^{\tau_{k+1}}_{\tau_k}\mathcal{D}b(X^n_{\tau_k},\alpha_{\tau_k})\tilde{\sigma}^{(l)}(r,X^n_{\kappa(n,r)},\alpha_{\kappa(n,r)})dW_r^l \notag
\end{align}
and hence one can write the following, 
\begin{align}
E\int^t_0 e^n_s &\Big(b(X^n_s,\alpha_s)-b(X^n_{\kappa(n,s)},\alpha_{\kappa(n,s)})\Big)ds=E\int^t_0 (e^n_s-e^n_{\kappa(n,s)})(H_s+R_s)ds\notag
\\
&+E\int^t_0e^n_{\kappa(n,s)}(H_s+R_s)ds\notag
\\
&+E\int^t_0 e^n_s\sum_{j_0\in \mathcal{S}}\int^s_{\kappa(n,s)}q_{\alpha_{r-}j_0}\Big(b(X^n_r,j_0)-b(X^n_r,\alpha_{r-})\Big)drds\nonumber
\\
&+E\int^t_0 e^n_s\sum_{k=0}^{N_{(\kappa(n,s),s)}}\Big(b(X^n_{\tau_{k+1}},\alpha_{\tau_k})-b(X^n_{\tau_k},\alpha_{\tau_k})-\mathcal{D}b(X^n_{\tau_k},\alpha_{\tau_k})(X^n_{\tau_{k+1}}-X^n_{\tau_k})\Big)ds\nonumber
\\
&+E\int^t_0 e^n_s\sum_{k=0}^{N_{(\kappa(n,s),s)}}\int^{\tau_{k+1}}_{\tau_k}\mathcal{D}b(X^n_{\tau_k},\alpha_{\tau_k})b^n(X^n_{\kappa(n,r)},\alpha_{\kappa(n,r)})drds\nonumber
\\
=&E\int^t_0\int^s_{\kappa(n,s)}\big(b(X_r,\alpha_r)-b^n(X^n_{\kappa(n,r)},\alpha_{\kappa(n,r)})\big)dr(H_s+R_s)ds\notag
\\
&+E\int^t_0\int^s_{\kappa(n,s)}\big(\sigma(X_r,\alpha_r)-\tilde{\sigma}(r,X^n_{\kappa(n,r)},\alpha_{\kappa(n,r)})\big)dW_r(H_s+R_s)ds \notag
\\
&+E\int^t_0e^n_{\kappa(n,s)}(H_s+R_s) ds\notag
\\
&+E\int^t_0 e^n_s\sum_{j_0\in \mathcal{S}}\int^s_{\kappa(n,s)}q_{\alpha_{r-}j_0}\Big(b(X^n_r,j_0)-b(X^n_r,\alpha_{r-})\Big)drds\nonumber
\\
&+E\int^t_0 e^n_s\sum_{k=0}^{N_{(\kappa(n,s),s)}}\Big(b(X^n_{\tau_{k+1}},\alpha_{\tau_k})-b(X^n_{\tau_k},\alpha_{\tau_k})-\mathcal{D}b(X^n_{\tau_k},\alpha_{\tau_k})(X^n_{\tau_{k+1}}-X^n_{\tau_k})\Big)ds\nonumber
\\
&+E\int^t_0 e^n_s\sum_{k=0}^{N_{(\kappa(n,s),s)}}\int^{\tau_{k+1}}_{\tau_k}\mathcal{D}b(X^n_{\tau_k},\alpha_{\tau_k})b^n(X^n_{\kappa(n,r)},\alpha_{\kappa(n,r)})drds\nonumber
\\
=:&C_1+C_2+C_3+C_4+C_5+C_6\label{eq:C_Splitting}
\end{align}
for any $t\in[0,T]$ and $n\in \mathbb{N}$.  For estimating $C_1$, one uses following splitting , 
\begin{align*}
\big(b(X_r,\alpha_r)&-b^n(X^n_{\kappa(n,r)},\alpha_{\kappa(n,r)})\big)=\big(b(X_r,\alpha_r)-b(X^n_r,\alpha_r)\big)+\big(b(X^n_r,\alpha_r)-b(X^n_{\kappa(n,r)},\alpha_r)\big)
\\
&+\big(b(X^n_{\kappa(n,r)},\alpha_r)-b(X^n_{\kappa(n,r)},\alpha_{\kappa(n,r)})\big)+\big(b(X^n_{\kappa(n,r)},\alpha_{\kappa(n,r)})-b^n(X^n_{\kappa(n,r)},\alpha_{\kappa(n,r)})\big)
\end{align*}
 to obtain,  
 \begin{align*}
 C_1  & := E\int^t_0\int^s_{\kappa(n,s)}\big(b(X_r,\alpha_r)-b^n(X^n_{\kappa(n,r)},\alpha_{\kappa(n,r)})\big)dr(H_s+R_s)ds\notag
 \\
 &  =E\int^t_0 \int^s_{\kappa(n,s)}\big(b(X_r,\alpha_r)-b(X^n_r,\alpha_r)\big) dr(H_s+R_s)ds
 \\
 &+E\int^t_0 \int^s_{\kappa(n,s)}\big(b(X^n_r,\alpha_r)-b(X^n_{\kappa(n,r)},\alpha_r)\big) dr(H_s+R_s)ds
 \\
 &+E\int^t_0 \int^s_{\kappa(n,s)}\big(b(X^n_{\kappa(n,r)},\alpha_r)-b(X^n_{\kappa(n,r)},\alpha_{\kappa(n,r)})\big) dr(H_s+R_s)ds
 \\
 &+E\int^t_0 \int^s_{\kappa(n,s)}\big(b(X^n_{\kappa(n,r)},\alpha_{\kappa(n,r)})-b^n(X^n_{\kappa(n,r)},\alpha_{\kappa(n,r)})\big) dr(H_s+R_s)ds
 \end{align*}
 and by the application of  Remark \ref{rem:growth}, Young's inequality and H\"older's inequality, one gets 
\begin{align}
C_1\leq& K\int^t_0nE\int^s_{\kappa(n,s)}|e^n_r|^2drds+Kn^{-1}\int^t_0\int^s_{\kappa(n,s)}\Big\{E(1+|X_r|+|X_r^n|)^{4\rho}E|H_s+R_s|^4\Big\}^{\frac{1}{2}}drds\notag
\\
&+K\int^t_0\bigg\{E\Big|\int^s_{\kappa(n,s)}(b(X^n_r,\alpha_r)-b(X^n_{\kappa(n,r)},\alpha_r)) dr \Big|^2E|H_s+R_s|^2 \bigg\}^{\frac{1}{2}}ds\notag
\\
&+K\int^t_0\bigg\{E\Big| \int^s_{\kappa(n,s)}(b(X^n_{\kappa(n,r)},\alpha_r)-b(X^n_{\kappa(n,r)},\alpha_{\kappa(n,r)})) dr\Big|^2E|H_s+R_s|^2 \bigg\}^{\frac{1}{2}}ds\notag
\\
&+K\int^t_0\bigg\{E\Big|\int^s_{\kappa(n,s)}(b(X^n_{\kappa(n,r)},\alpha_{\kappa(n,r)})-b^n(X^n_{\kappa(n,r)},\alpha_{\kappa(n,r)})) dr \Big|^2E|H_s+R_s|^2 \bigg\}^{\frac{1}{2}}ds\label{eq:C_1}
\end{align}
for any $n\in\mathbb{N}$ and $t\in[0,T]$. Now, for any $p_1\geq 2$, 
\begin{align*}
E|H_s+& R_s|^{p_1} \leq  KE\Big|\sum_{i_0\neq j_0}\int^s_{\kappa(n,s)}\Big(b(X^n_r,j_0)-b(X^n_r,i_0)\Big)d[M_{i_0j_0}](r)\Big|^{p_1}
\\
&+KE\Big|\sum_{i_0\neq j_0}\int^s_{\kappa(n,s)}\Big(b(X^n_r,j_0)-b(X^n_r,i_0)\Big)d\langle M_{i_0j_0}\rangle(r)\Big|^{p_1}
\\
&+KE\Big|\sum^m_{l=1}\sum_{k=0}^{N_{(\kappa(n,s),s)}}\int^{\tau_{k+1}}_{\tau_k}\mathcal{D}b(X^n_{\tau_k},\alpha_{\tau_k})\tilde{\sigma}^{(l)}(r,X^n_{\kappa(n,r)},\alpha_{\kappa(n,r)})dW_r^l\Big|^{p_1}
\\
\leq&KE\Big(\mathbb{I}\{N_{(\kappa(n,s),s)}=1\}\sum_{i_0\neq j_0}\int^s_{\kappa(n,s)}\big|b(X^n_r,j_0)-b(X^n_r,i_0)\big|d[M_{i_0j_0}](r)\Big)^{p_1}
\\
&+KE\Big(\mathbb{I}\{N_{(\kappa(n,s),s)}\geq2\}\sum_{i_0\neq j_0}\int^s_{\kappa(n,s)}\big|b(X^n_r,j_0)-b(X^n_r,i_0)\big|d[M_{i_0j_0}](r)\Big)^{p_1}
\\
&+KE\Big|\sum_{ j_0\in\mathcal{S}}\int^s_{\kappa(n,s)}q_{\alpha_{r-}j_0}\Big(b(X^n_r,j_0)-b(X^n_r,\alpha_{r-})\Big)dr\Big|^{p_1}
\\
&+KE\Big(\sum^m_{l=1}\sum_{k=0}^{N_{(\kappa(n,s),s)}}E\Big\{\Big|\int^{\tau_{k+1}}_{\tau_k}\mathcal{D}b(X^n_{\tau_k},\alpha_{\tau_k})\tilde{\sigma}^{(l)}(r,X^n_{\kappa(n,r)},\alpha_{\kappa(n,r)})dW_r^l\Big|^{p_1}\Big|\mathscr{F}_T^{\alpha}\Big\}\Big)
\end{align*}
which on  the application of Remark \ref{rem:growth}, H\"older's inequality, Young's inequality, and Corollary \ref{cor:tildaSigma} implies,
\begin{align*}
E|H_s+&R_s|^{p_1}\leq KE\Big(\mathbb{I}\{N_{(\kappa(n,s),s)}=1\}\Big|\sum_{i_0\neq j_0}\int^s_{\kappa(n,s)}d[M_{i_0j_0}](r)\Big|^{p_1}E\Big\{\sup_{0\leq r \leq T}(1+|X_r^n|)^{p_1\rho+p_1}\Big|\mathscr{F}_T^{\alpha}\Big\}\Big)
\\
&+KE\Big(\mathbb{I}\{N_{(\kappa(n,s),s)}\geq2\}\Big|\sum_{i_0\neq j_0}\int^s_{\kappa(n,s)}d[M_{i_0j_0}](r)\Big|^{p_1}E\Big\{\sup_{0\leq r \leq T}(1+|X_r^n|)^{p_1\rho+p_1}\Big|\mathscr{F}_T^{\alpha}\Big\}\Big)
\\
&+Kn^{-p_1+1}\int^s_{\kappa(n,s)}E\Big(\sup_{0\leq r\leq T}\big(1+|X^n_r|\big)^{p_1\rho+p_1}\Big)dr
\\
&+Kn^{-\frac{p_1}{2}+1}E\Big(\sum^m_{l=1}\sum_{k=0}^{N_{(\kappa(n,s),s)}}\int^{\tau_{k+1}}_{\tau_k}E\Big\{\big|\mathcal{D}b(X^n_{\tau_k},\alpha_{\tau_k})\tilde{\sigma}^{(l)}(r,X^n_{\kappa(n,r)},\alpha_{\kappa(n,r)})\big|^{p_1}\Big|\mathscr{F}_T^{\alpha}\Big\}dr\Big)
\end{align*}
for any $s\in[0,T]$ and $n\in\mathbb{N}$. Due to  Young's inequality, Remark \ref{rem:growth} and Lemmas [\ref{lem:SchemeMoment}, \ref{lem:EN_i, EN_i^2 }], one obtains, 
\begin{align}
E|H_s+R_s|^{p_1}\leq &KE\Big( \mathbb{I}\{N_{(\kappa(n,s),s)}=1\}N^{p_1}_{(\kappa(n,s),s)}\Big)+KE\Big( \mathbb{I}\{N_{(\kappa(n,s),s)}\geq2\}N^{p_1}_{(\kappa(n,s),s)}\Big)\notag
\\
&+Kn^{-p_1}+Kn^{-\frac{p_1}{2}+1}E\Big(\sum^m_{l=1}\sum_{k=0}^{N_{(\kappa(n,s),s)}}\int^{\tau_{k+1}}_{\tau_k}\Big(E\Big\{\big|\mathcal{D}b(X^n_{\tau_k},\alpha_{\tau_k})\big|^{2p_1}\Big|\mathscr{F}_T^{\alpha}\Big\}\notag
\\
&+E\Big\{\big|\tilde{\sigma}^{(l)}(r,X^n_{\kappa(n,r)},\alpha_{\kappa(n,r)})\big|^{2p_1}\Big|\mathscr{F}_T^{\alpha}\Big\}\Big)^{\frac{1}{2}}dr\Big)\notag
\\
\leq &Kn^{-p_1}+K P\big(N_{(\kappa(n,s),s)}=1\big)+K\sum_{N=2}^{\infty}N^{p_1} P\big(N_{(\kappa(n,s),s)}=N\big)\notag
\\
&+Kn^{-\frac{p_1}{2}+1}E\Big(\sum_{k=0}^{N_{(\kappa(n,s),s)}}\int^{\tau_{k+1}}_{\tau_k}\Big(E\Big\{\sup_{0\leq r\leq T}\big(1+|X^n_r|\big)^{2\rho p_1}\Big|\mathscr{F}_T^{\alpha}\Big\}\notag
\\
&+E\Big\{\sup_{0\leq r\leq T}\big(1+|X^n_r|\big)^{2 p_1}\Big|\mathscr{F}_T^{\alpha}\Big\}\Big)^{\frac{1}{2}}dr\Big)\notag
\\
\leq& Kn^{-p_1}+Kn^{-1}+K\sum^{\infty}_{N=2}N^{p_1}(qh)^N+Kn^{-\frac{p_1}{2}}\notag
\\
\leq& Kn^{-1}+Kh^2\sum^{\infty}_{N=0}(N+2)^{p_1}(qh)^N \notag
\end{align}
and by noticing that the infinite series in the above expression is convergence as $0<(1/n)<1/(2q)$, one obtains
\begin{align}
E|H_s+R_s|^{p_1} \leq Kn^{-1}\label{eq:H_R_Rate}
\end{align}
for any $p_1\geq 2$, $s\in[0,T]$ and $n\in \mathbb{N}$. Now, due to \eqref{eq:H_R_Rate}, H\"older's inequality, Remark \ref{rem:growth}, Assumption $H$-6 and Lemma \ref{lem:SchemeMoment}, one can estimate \eqref{eq:C_1} as,
\begin{align*}
C_1\leq & K\int^t_0 E\Big(\sup_{0\leq r\leq s}|e^n_r|^2\Big)ds+Kn^{-2}
\\
&+Kn^{-1}\int^t_0\Big\{\int^s_{\kappa(n,s)}E|b(X^n_r,\alpha_r)-b(X^n_{\kappa(n,r)},\alpha_r)|^2dr\Big\}^{\frac{1}{2}}ds
\\
&+Kn^{-1}\int^t_0\Big\{\int^s_{\kappa(n,s)}E|b(X^n_{\kappa(n,r)},\alpha_r)-b(X^n_{\kappa(n,r)},\alpha_{\kappa(n,r)})|^2dr\Big\}^{\frac{1}{2}}ds
\\
&+Kn^{-1}\int^t_0\Big\{\int^s_{\kappa(n,s)}E|b(X^n_{\kappa(n,r)},\alpha_{\kappa(n,r)})-b^n(X^n_{\kappa(n,r)},\alpha_{\kappa(n,r)})|^2dr\Big\}^{\frac{1}{2}}ds
\\
\leq &K\int^t_0 E\Big(\sup_{0\leq r \leq s}|e_r^n|^2\Big)ds+Kn^{-2}
\\
&+Kn^{-1}\int^t_0\Big\{\int^s_{\kappa(n,s)}E(1+|X^n_r|+|X^n_{\kappa(n,r)}|)^{2\rho}|X_r^n-X^n_{\kappa(n,r)}|^2dr\Big\}^{\frac{1}{2}}ds
\\
&+Kn^{-1}\int^t_0\Big\{\int^s_{\kappa(n,s)}E\Big(\mathbb{I}\{\alpha_r\neq \alpha_{\kappa(n,r)}\}E\Big\{(1+\sup_{0\leq r \leq T}|X^n_r|^{2\rho+2})\Big|\mathscr{F}_T^{\alpha}\Big\}\Big)dr\Big\}^{\frac{1}{2}}ds
\\
&+Kn^{-1}\int^t_0\Big\{\int^s_{\kappa(n,s)}E\Big(n^{-2}(1+\sup_{0\leq r \leq T}|X^n_r|^{2\rho_1+2})\Big)dr\Big\}^{\frac{1}{2}}ds
\\
\leq &K\int^t_0 E\sup_{0\leq r \leq s}|e_r^n|^2ds+Kn^{-2}\nonumber
\\
&+Kn^{-1}\int^t_0\Big\{\int^s_{\kappa(n,s)}\Big(E(1+|X^n_r|+|X^n_{\kappa(n,r)}|)^{4\rho}E\Big\{\sup_{0\leq r \leq T}|X_r^n-X^n_{\kappa(n,r)}|^4\Big\}\Big)^{\frac{1}{2}}dr\Big\}^{\frac{1}{2}}ds\nonumber
\\
&+Kn^{-1}\int^t_0\Big\{\int^s_{\kappa(n,s)} P(\alpha_r\neq \alpha_{\kappa(n,r)})dr\Big\}^{\frac{1}{2}}ds\nonumber
\end{align*}
and hence by Lemmas [\ref{lem:SchemeMoment}, \ref{lem:scheme_differ_rate}],  one obtains
\begin{align}
C_1\leq& K\int^t_0 E\sup_{0\leq r \leq s}|e_r^n|^2ds+Kn^{-2}\notag
\\
&+Kn^{-1}\int^t_0\Big\{\int^s_{\kappa(n,s)} \big(q_{\alpha_{\kappa(n,r)}\alpha_r}(r-\kappa(n,r))+o(r-\kappa(n,r))\big)dr\Big\}^{\frac{1}{2}}ds\notag
\\
\leq&K\int^t_0 E\sup_{0\leq r \leq s}|e_r^n|^2ds+Kn^{-2}\label{eq:C_1_Rate}
\end{align}
for any $t\in [0,T]$ and $n\in \mathbb{N}$. By using following splitting, 
\begin{align*}
\big(\sigma(X_r,\alpha_r)&-\tilde{\sigma}(r,X^n_{\kappa(n,r)},\alpha_{\kappa(n,r)})\big)=\big(\sigma(X_r,\alpha_r)-\sigma(X_r^n,\alpha_r)\big)
\\
&+\big(\sigma(X_r^n,\alpha_r)-\tilde{\sigma}(r,X^n_{\kappa(n,r)},\alpha_{\kappa(n,r)})\big),
\end{align*}
 $C_2$ can be written as
\begin{align*}
C_2:=&E\int^t_0\int^s_{\kappa(n,s)}\big(\sigma(X_r,\alpha_r)-\tilde{\sigma}(r,X^n_{\kappa(n,r)},\alpha_{\kappa(n,r)})\big)dW_r(H_s+R_s)ds
\\
=&E\int^t_0 \int^s_{\kappa(n,s)}(\sigma(X_r,\alpha_r)-\sigma(X_r^n,\alpha_r))dW_r (H_s+R_s)ds
\\
&+E\int^t_0 \int^s_{\kappa(n,s)}(\sigma(X_r^n,\alpha_r)-\tilde{\sigma}(r,X^n_{\kappa(n,r)},\alpha_{\kappa(n,r)}))dW_r (H_s+R_s)ds
\end{align*}
which by  Young's inequality, H\"older's inequality, Assumption $H$-2, \eqref{eq:H_R_Rate} and Lemma \ref{lem:sigmaCrusial} gives,
\begin{align}
C_2\leq& Kn\int^t_0E\Big|\int^s_{\kappa(n,s)}(\sigma(X_r,\alpha_r)-\sigma(X_r^n,\alpha_r))dW_r\Big|^2ds+Kn^{-1}\int^t_0E|H_s+R_s|^2ds\nonumber
\\
&+K\int^t_0\Big\{E\Big|\int^s_{\kappa(n,s)}(\sigma(X_r^n,\alpha_r)-\tilde{\sigma}(r,X^n_{\kappa(n,r)},\alpha_{\kappa(n,r)}))dW_r\Big|^2E|H_s+R_s|^2\Big\}^{\frac{1}{2}}ds\nonumber
\\
\leq& Kn\int^t_0\int^s_{\kappa(n,s)}|e^n_r|^2drds+Kn^{-2}\nonumber
\\
&+Kn^{-\frac{1}{2}}\int^t_0\Big\{\int^s_{\kappa(n,s)}E|\sigma(X_r^n,\alpha_r)-\tilde{\sigma}(r,X^n_{\kappa(n,r)},\alpha_{\kappa(n,r)})|^2dr\Big\}^{\frac{1}{2}}ds\nonumber
\\
\leq &K\int^t_0 E\sup_{0\leq r \leq s}|e_r^n|^2ds+Kn^{-2}\label{eq:C_2_Rate}
\end{align}
for any $t\in [0,T]$ and $n\in \mathbb{N}$. Also, notice that, 
\begin{align}
C_3:=&E\int^t_0e^n_{\kappa(n,s)}(H_s+R_s) ds=0\label{eq:C_3}
\end{align}
for any $t\in[0,T]$ and $n\in\mathbb{N}$. Moreover, by Young's inequality and H\"older's inequality one estimates
\begin{align*}
C_4+&C_5+C_6:=E\int^t_0 e^n_s\sum_{j_0\in \mathcal{S}}\int^s_{\kappa(n,s)}q_{\alpha_{r-}j_0}\Big(b(X^n_r,j_0)-b(X^n_r,\alpha_{r-})\Big)drds\nonumber
\\
&+E\int^t_0 e^n_s\sum_{k=0}^{N_{(\kappa(n,s),s)}}\Big(b(X^n_{\tau_{k+1}},\alpha_{\tau_k})-b(X^n_{\tau_k},\alpha_{\tau_k})-\mathcal{D}b(X^n_{\tau_k},\alpha_{\tau_k})(X^n_{\tau_{k+1}}-X^n_{\tau_k})\Big)ds\nonumber
\\
&+E\int^t_0 e^n_s\sum_{k=0}^{N_{(\kappa(n,s),s)}}\int^{\tau_{k+1}}_{\tau_k}\mathcal{D}b(X^n_{\tau_k},\alpha_{\tau_k})b^n(X^n_{\kappa(n,r)},\alpha_{\kappa(n,r)})drds
\\
\leq& K\int^t_0E\sup_{0\leq r \leq s}|e^n_r|^2ds+K\int^t_0E\Big|\sum_{j_0\in \mathcal{S}}\int^s_{\kappa(n,s)}q_{\alpha_{r-}j_0}\Big(b(X^n_r,j_0)-b(X^n_r,\alpha_{r-})\Big)dr\Big|^2ds
\\
&+K\int^t_0E\Big|\sum_{k=0}^{N_{(\kappa(n,s),s)}}\Big(b(X^n_{\tau_{k+1}},\alpha_{\tau_k})-b(X^n_{\tau_k},\alpha_{\tau_k})-\mathcal{D}b(X^n_{\tau_k},\alpha_{\tau_k})(X^n_{\tau_{k+1}}-X^n_{\tau_k})\Big)\Big|^2ds
\\
&+K\int^t_0E\Big|\sum_{k=0}^{N_{(\kappa(n,s),s)}}\int^{\tau_{k+1}}_{\tau_k}\mathcal{D}b(X^n_{\tau_k},\alpha_{\tau_k})b^n(X^n_{\kappa(n,r)},\alpha_{\kappa(n,r)})dr\Big|^2ds
\\
\leq &K\int^t_0E\sup_{0\leq r \leq s}|e^n_r|^2ds+K\int^t_0n^{-1}E\sum_{j_0\in \mathcal{S}}\int^s_{\kappa(n,s)}|b(X^n_r,j_0)-b(X^n_r,\alpha_{r-})|^2drds
\\
&+K\int^t_0E\Big((1+N_{(\kappa(n,s),s)})\sum_{k=0}^{N_{(\kappa(n,s),s)}}E\Big\{|b(X^n_{\tau_{k+1}},\alpha_{\tau_k})-b(X^n_{\tau_k},\alpha_{\tau_k})
\\
&\qquad\qquad-\mathcal{D}b(X^n_{\tau_k},\alpha_{\tau_k})(X^n_{\tau_{k+1}}-X^n_{\tau_k})|^2\Big|\mathscr{F}_T^{\alpha}\Big\}\Big)ds
\\
&+Kn^{-1}\int^t_0E\Big((1+N_{(\kappa(n,s),s)})\sum_{k=0}^{N_{(\kappa(n,s),s)}}\int^{\tau_{k+1}}_{\tau_k}E\Big\{|\mathcal{D}b(X^n_{\tau_k},\alpha_{\tau_k})
\\
&\qquad \qquad\times b^n(X^n_{\kappa(n,r)},\alpha_{\kappa(n,r)})|^2\Big|\mathscr{F}_T^{\alpha}\Big\}dr\Big)ds
\end{align*}
which by using Assumption $H$-5, H\"older's inequality, Remark \ref{rem:growth}, and Lemmas [\ref{lem:SchemeMoment}, \ref{lem: mvt}, \ref{lem:EN_i, EN_i^2 }, \ref{lem:scheme_differ_rate}] gives,
\begin{align}
C_4+&C_5+C_6\leq K\int^t_0E\sup_{0\leq r \leq s}|e^n_r|^2ds+Kn^{-1}\int^t_0\int^s_{\kappa(n,s)}E(1+|X^n_r|)^{2\rho+2}drds\nonumber
\\
&+K\int^t_0E\Big((1+N_{(\kappa(n,s),s)})\sum_{k=0}^{N_{(\kappa(n,s),s)}}E\Big\{(1+|X^n_{\tau_k}|+|X^n_{\tau_{k+1}}|)^{2\rho}|X^n_{\tau_{k+1}}-X^n_{\tau_k}|^4\Big|\mathscr{F}_T^{\alpha}\Big\}\Big)ds\nonumber
\\
&+Kn^{-1}\int^t_0E\Big((1+N_{(\kappa(n,s),s)})\sum_{k=0}^{N_{(\kappa(n,s),s)}}\int^{\tau_{k+1}}_{\tau_k}E\Big\{(1+\sup_{0\leq r \leq T}|X_r^n|^{4\rho+2})\Big|\mathscr{F}_T^{\alpha}\Big\}dr\Big)ds\nonumber
\\
\leq &K\int^t_0E\sup_{0\leq r \leq s}|e^n_r|^2ds+Kn^{-2}\nonumber
\\
&+K\int^t_0E\Big((1+N_{(\kappa(n,s),s)})\sum_{k=0}^{N_{(\kappa(n,s),s)}}\Big[E\Big\{(1+|X^n_{\tau_k}|+|X^n_{\tau_{k+1}}|)^{4\rho}\Big|\mathscr{F}_T^{\alpha}\Big\}\nonumber
\\
&\qquad \qquad\times E\Big\{|X^n_{\tau_{k+1}}-X^n_{\tau_k}|^8\Big|\mathscr{F}_T^{\alpha}\Big\}\Big]^{\frac{1}{2}}\Big)ds\nonumber
\\
&+Kn^{-1}\int^t_0E\Big((1+N_{(\kappa(n,s),s)})\sum_{k=0}^{N_{(\kappa(n,s),s)}}(\tau_{k+1}-\tau_k)\Big)ds\nonumber
\\
\leq & K\int^t_0E\sup_{0\leq r \leq s}|e^n_r|^2ds+Kn^{-2}\notag
\\
&+K\int^t_0E\Big((1+N_{(\kappa(n,s),s)})\sum_{k=0}^{N_{(\kappa(n,s),s)}}\big((\tau_{k+1}-\tau_k)^4+(\tau_{k+1}-\tau_k)^8\big)^{\frac{1}{2}}\Big)ds\nonumber
\\
\leq & K\int^t_0E\sup_{0\leq r \leq s}|e^n_r|^2ds+Kn^{-2}+Kn^{-2}\int^t_0E\big(1+N_{(\kappa(n,s),s)}\big)^2ds\notag
\\
\leq & K\int^t_0E\sup_{0\leq r \leq s}|e^n_r|^2ds+Kn^{-2}\label{eq:C_4,5,6Rate}
\end{align}
for any $t\in[0,T]$ and $n\in \mathbb{N}$.  Hence by combining the estimates from \eqref{eq:C_1_Rate} to \eqref{eq:C_4,5,6Rate} in \eqref{eq:C_Splitting}, one completes the proof.
\end{proof}
\begin{proof}[\textbf{Proof of Theorem \ref{thm:main}}]
Recall $e_t^n:=X_t-X^n_t$, $\textit{i.e.}$
\begin{align*}
e_t^n=e_0^n+\int^t_0\Big(b(X_s,\alpha_s)-b^n(X^n_{\kappa(n,s)},\alpha_{\kappa(n,s)})\Big)ds+\int^t_0\Big(\sigma(X_s,\alpha_s)-\tilde{\sigma}(s,X^n_{\kappa(n,s)},\alpha_{\kappa(n,s)})\Big)dW_s
\end{align*}
then use It\^o's formula for $|e^n_t|^2$ to get,
\begin{align*}
|e_t^n|^2=&|e^n_0|^2+2\int^t_0e^n_s\Big(b(X_s,\alpha_s)-b^n(X^n_{\kappa(n,s)},\alpha_{\kappa(n,s)})\Big)ds
\\
&+2\int^t_0e^n_s\Big(\sigma(X_s,\alpha_s)-\tilde{\sigma}(s,X^n_{\kappa(n,s)},\alpha_{\kappa(n,s)})\Big)dW_s
\\
&+\int^t_0 |\sigma(X_s,\alpha_s)-\tilde{\sigma}(s,X^n_{\kappa(n,s)},\alpha_{\kappa(n,s)})|^2ds
\end{align*}
almost surely for any $t\in[0,T]$ and $n\in\mathbb{N}$. By Assumptions $H$-1 and $H$-4, one obtains
\begin{align}
E|e^n_t|^2\leq & E|X_0-X_0^n|+2E\int^t_0 e^n_s\big(b(X_s,\alpha_s)-b^n(X^n_{\kappa(n,s)},\alpha_{\kappa(n,s)})\big)ds\nonumber
\\
&+E\int^t _0 |\sigma(X_s,\alpha_s)-\tilde{\sigma}(s,X^n_{\kappa(n,s)},\alpha_{\kappa(n,s)})|^2ds\nonumber
\\
=:&\,Kn^{-2}+ F_1+F_2 \label{eq:H_Splitting}
\end{align}
By using the splitting 
\begin{align*}
\big(b(X_s,\alpha_s)-b^n(X^n_{\kappa(n,s)},\alpha_{\kappa(n,s)})\big)&=\big(b(X_s,\alpha_s)-b(X^n_s,\alpha_s)\big)+\big(b(X^n_s,\alpha_s)-b(X^n_{\kappa(n,s)},\alpha_{\kappa(n,s)})\big)
\\
&+\big(b(X^n_{\kappa(n,s)},\alpha_{\kappa(n,s)}\big)-b^n\big(X^n_{\kappa(n,s)},\alpha_{\kappa(n,s)})\big),
\end{align*}
 $F_1$ can be written as,
\begin{align*}
F_1:=&2E\int^t_0 e^n_s\big(b(X_s,\alpha_s)-b^n(X^n_{\kappa(n,s)},\alpha_{\kappa(n,s)})\big)ds
\\
\leq &2E\int^t_0 e^n_s(b(X_s,\alpha_s)-b(X^n_s,\alpha_s))ds +2E\int^t_0 e^n_s(b(X^n_s,\alpha_s)-b(X^n_{\kappa(n,s)},\alpha_{\kappa(n,s)}))ds
\\
&+ 2E\int^t_0e^n_s(b(X^n_{\kappa(n,s)},\alpha_{\kappa(n,s)})-b^n(X^n_{\kappa(n,s)},\alpha_{\kappa(n,s)}))ds
\end{align*}
which due to Assumptions $H$-2, $H$-6, Young's inequality and Lemmas [\ref{lem:SchemeMoment}, \ref{lem:bCrusial}] yields
\begin{align}
F_1\leq & K\int^t_0 E|e^n_s|^2ds+Kn^{-2}+K\int^t_0 E|b(X^n_{\kappa(n,s)},\alpha_{\kappa(n,s)})-b^n(X^n_{\kappa(n,s)},\alpha_{\kappa(n,s)})|^2ds\nonumber
\\
\leq &K\int^t_0 E|e^n_s|^2ds+Kn^{-2}+Kn^{-2}\int^t_0E\sup_{0\leq s \leq T}(1+|X^n_s|)^{2\rho_1+2}ds \leq K\int^t_0 E|e^n_s|^2ds+Kn^{-2}\label{eq:H_1 Rate}
\end{align}
for any $t\in[0,T]$ and $n\in\mathbb{N}$. For $F_2$, one uses following splitting, 
\begin{align}
\sigma(X_s,\alpha_s)-\tilde{\sigma}(s,X^n_{\kappa(n,s)},\alpha_{\kappa(n,s)})&=(\sigma(X_s,\alpha_s)-\sigma(X^n_s,\alpha_s))\nonumber
\\
&+(\sigma(X^n_s,\alpha_s)-\tilde{\sigma}(s,X^n_{\kappa(n,s)},\alpha_{\kappa(n,s)})),\notag
\end{align}
 and then by the application of Assumption $H$-2 and Lemma \ref{lem:sigmaCrusial}, one obtains
\begin{align}
F_2:=&2E\int^t _0 |\sigma(X_s,\alpha_s)-\tilde{\sigma}(s,X^n_{\kappa(n,s)},\alpha_{\kappa(n,s)})|^2ds\nonumber
\\
\leq & KE\int^t_0|\sigma(X_s,\alpha_s)-\sigma(X^n_s,\alpha_s)|^2ds+KE\int^t_0|\sigma(X^n_s,\alpha_s)-\tilde{\sigma}(s,X^n_{\kappa(n,s)},\alpha_{\kappa(n,s)})|^2ds\nonumber
\\
\leq & K\int^t_0E|e^n_s|^2ds+Kn^{-2}\label{eq:H_3Rate}
\end{align}
for any $t\in[0,T]$ and $n\in\mathbb{N}$. By combining the estimates from \eqref{eq:H_1 Rate},  and \eqref{eq:H_3Rate} in \eqref{eq:H_Splitting}, yields
\begin{align*}
\sup_{0\leq t\leq u} E|e^n_t|^2\leq  K\int^u_0\sup_{0\leq r \leq s}E|e^n_r|^2ds+Kn^{-2}
\end{align*}
for any $u\in[0,T]$ and $n\in\mathbb{N}$. The Gronwall's lemma complete the proof.
\end{proof}

\begin{ack}
First author gratefully acknowledges financial support provided by Science and Engineering Research Board (SERB) under its MATRICS program through grant number SER-1329-MTD.
\end{ack}

\bibliographystyle{amsplain}

\end{document}